\documentclass[10pt]{elsarticle}
\usepackage[cp1251]{inputenc}
\usepackage[english]{babel}
\usepackage{amsmath}
\usepackage{amssymb}
\usepackage{amsfonts}
\usepackage{amsthm}
\usepackage{mathtools}
\usepackage{dsfont}
\usepackage{rotating}
\usepackage{graphicx}
\usepackage{floatflt,epsfig}
\usepackage{lineno,hyperref}
\usepackage{enumerate}
\usepackage{colortbl}
\usepackage{array,tabularx,tabulary,booktabs}
\usepackage{longtable}
\usepackage{multirow}
\usepackage{wrapfig}
\usepackage{subcaption}
\usepackage{pdflscape}
\usepackage[table]{xcolor}
\newcolumntype{^}{>{\currentrowstyle}}

\journal{Arxiv}
\setcounter{page}{1}
\newtheorem{lemma}{Lemma}

\newtheorem{theorem}{Theorem}
\newtheorem{corollary}{Corollary}
\newtheorem{proposition}{Proposition}
\newtheorem{remark}{Remark}
\newtheorem{problem}{Problem}
\bibliographystyle{elsarticle-num}

\begin{document}
\renewcommand{\abstractname}{Abstract}
\renewcommand{\refname}{References}
\renewcommand{\tablename}{Table}
\renewcommand{\arraystretch}{0.9}
\thispagestyle{empty}
\sloppy

\begin{frontmatter}
\title{A general construction of strictly Neumaier graphs and a related switching}
\author[01]{Rhys J. Evans}
\ead{rhysjevans00@gmail.com}

\author[02]{Sergey Goryainov}
\ead{sergey.goryainov3@gmail.com}

\author[01,03,04]{Elena~V.~Konstantinova}
\ead{e\_konsta@math.nsc.ru}

\author[01,03]{Alexander~D.~Mednykh}
\ead{smedn@mail.ru}

\address[01]{Sobolev Institute of Mathematics, Ak. Koptyug av. 4, Novosibirsk, 630090, Russia}
\address[02] {School of Mathematical Sciences, Hebei International Joint Research Center for Mathematics and Interdisciplinary Science, \\Hebei Normal University, Shijiazhuang  050024, P.R. China}
\address[03]{Novosibirsk State University, Pirogova str. 2, Novosibirsk, 630090, Russia}
\address[04]{Three Gorges Mathematical Research Center, China Three Gorges University, 8 University Avenue, Yichang 443002, Hubei Province, China}


\begin{abstract}
We present a construction of Neumaier graphs with nexus 1, which generalises two known constructions of Neumaier graphs. We also use W. Wang, L. Qiu, and Y. Hu switching to show that we construct cospectral Neumaier graphs. Finally, we show that several small strictly Neumaier graphs can be obtained from our construction, and give a geometric or algebraic description for each of these graphs. 
\end{abstract}

\begin{keyword}
edge-regular graph; regular clique; Neumaier graph; WQH-switching; perfect code 
\vspace{\baselineskip}
\MSC[2010] 05C25 \sep 05B45 \sep 05C69
\end{keyword}
\end{frontmatter}

\section{Introduction}\label{Intro}
A regular clique in a finite regular graph is a clique such that every vertex that does not belong to the clique is adjacent to the same positive number of vertices in the clique. A regular clique in a graph can be equivalently viewed as a clique which is a part of an
equitable 2-partition (see \cite{BH12, GR01}), or a clique which is a completely regular code of radius 1 (see \cite{N92} and
\cite[p. 345]{BCN89}) in this graph.

It is well known that a clique in a strongly regular graph is regular if and
only if it is a Delsarte clique (see \cite{BHK07,BCN89}).
In \cite{N81}, A. Neumaier posed the problem of whether there exists a non-complete, edge-regular,
non-strongly regular graph containing a regular clique. A \emph{Neumaier graph}
is a non-complete edge-regular graph containing a regular clique and a \emph{strictly
Neumaier graph} is a non-strongly regular Neumaier graph. (These definitions are analogous to the definitions of Deza graphs and strictly Deza graphs \cite{EFHHH99}.)

Two families of strictly Neumaier graphs with $1$-regular cliques were found in \cite{GK18} and \cite{GK19}. In \cite{EGP19}, two strictly Neumaier graphs with $2^i$-regular cliques for every positive integer $i$ were constructed. In \cite{ADDK21}, strictly Neumaier graphs with few eigenvalues were studied.  

In this paper we present a generalisation of the constructions found in \cite{GK18} and \cite{GK19}; the general construction requires the existence of edge-regular graphs admitting a partition into perfect 1-codes of a certain size. In Section \ref{sec:Prelims} we give several definitions related to strictly Neumaier graphs, and the definition of WQH-switching. In Section \ref{sec:GeneralConstruction} we present the main results of this paper. A general construction of Neumaier graphs containing 1-regular cliques is given in Theorem \ref{GeneralConstruction}, followed by a criterion for the resulting graph to be a strictly Neumaier graph in Corollary \ref{cor:t2strict}. Further, we use a WQH-switching to show that our construction creates cospectral Neumaier graphs in Proposition \ref{Switching}.

In Section \ref{sec:Examples}, we present several examples of small strictly Neumaier graphs, and show that each of these graphs can be found using our general construction. For each of these graphs, we give a geometric or algebraic description of their structure. Finally, in Section \ref{sec:FurtherProblems} we present families of infinite edge-regular graphs, and ask if we can use these graphs to construct infinite families of strictly Neumaier graphs. In finding such families, we would be extending the constructions of certain graphs found in Section \ref{sec:Examples} to infinite families of graphs.

We would like to note that the main result of this paper, Theorem \ref{GeneralConstruction}, has already been referenced, presented, adjusted and used in \cite{ACDKZ23}. Furthermore, the strictly Neumaier graph we construct in Example \ref{graph65} was found independently by the authors of \cite{ACDKZ23}, who present an infinite family of strictly Neumaier graphs containing this graph as its smallest graph. In \cite{ACDKZ23}, there are also some interesting non-existence results for Neumaier graphs.

\section{Preliminaries}\label{sec:Prelims}

In this paper we only consider undirected graphs that contain no loops or multiple
edges. Let $\Gamma$ be such a graph. We denote by $V(\Gamma)$ the vertex set of $\Gamma$ and $E(\Gamma)$ the edge set of $\Gamma$. For a vertex $u \in V(\Gamma)$ we define the \emph{neighbourhood} of $u$ in $\Gamma$ to be the
set $\Gamma(u) = \{w \in V(\Gamma) : uw \in E(\Gamma)\}$.

Let $\Gamma$ be a graph and $v = |V(\Gamma)|$. The graph $\Gamma$ is called \emph{$k$-regular} if every vertex has
neighbourhood of size $k$. The graph $\Gamma$ is \emph{edge-regular} if it is non-empty, $k$-regular, and every pair of
adjacent vertices have exactly $\lambda$ common neighbours. Then $\Gamma$ is said to be edge-regular
with \emph{parameters} $(v, k, \lambda)$. The graph $\Gamma$ is \emph{co-edge-regular} if it is non-complete, $k$-regular and every pair of distinct non-adjacent vertices have exactly $\mu$ common neighbours. Then $\Gamma$ is said to be co-edge-regular with \emph{parameters} $(v, k, \mu)$.
The graph $\Gamma$ is \emph{strongly regular} if it is both edge-regular and co-edge-regular. If $\Gamma$ is
edge-regular with parameters $(v, k, \lambda)$, and co-edge-regular with parameters $(v, k, \mu)$, the
graph is called \emph{strongly regular} with \emph{parameters} $(v, k, \lambda, \mu)$.

A \emph{clique} in a graph $\Gamma$ is a set of pairwise adjacent vertices of $\Gamma$, and a clique of size $s$
is called an \emph{$s$-clique}. A clique $S$ in a regular graph $\Gamma$ is \emph{regular} if every vertex that does not belong to $S$ is
adjacent to the same number $m > 0$ of vertices in $S$. We then say that $S$ has \emph{nexus}
$m$ and is \emph{$m$-regular}. A set of cliques of a graph $\Gamma$ that partition the vertex set of $\Gamma$ is called a \emph{spread}
in $\Gamma$.

A \emph{Neumaier graph} with \emph{parameters} $(v, k, \lambda; m, s)$ is a non-complete edge-regular graph with parameters $(v,k,\lambda)$  containing an $m$-regular $s$-clique.  A \emph{strictly
Neumaier graph} is a Neumaier graph that is not strongly regular.

\subsection{Perfect codes}

Let $\Gamma$ be a simple undirected graph and $e \ge 1$ an integer. The \emph{ball} with radius $e$
and centre $u \in V(\Gamma)$ is the set of vertices of $\Gamma$ with distance at most $e$ to $u$ in $\Gamma$. A subset $C$ of $V(\Gamma)$ is called a \emph{perfect $e$-code} in $\Gamma$ if the balls with radius $e$ and centres in $C$ form a partition
of $V(\Gamma)$ (see \cite{B73, K86}). In particular, a perfect 1-code  is a subset of vertices $C$ such that every vertex not in $C$ is adjacent to a unique element of $C$.

\subsection{Wang-Qiu-Hu switching}

The \emph{spectrum} of a graph $\Gamma$ is the multiset of eigenvalues of the adjacency matrix of $\Gamma$ (see \cite{BH12}). Two graphs are \emph{cospectral} if they have the same spectra.

The following switching, which produces cospectral graphs, was discovered in \cite{WQH19} and applied in \cite{IM19} to obtain new strongly regular graphs.

\begin{lemma}[WQH-switching]\label{WQHswitching}
Let $\Gamma$ be a graph whose vertex set is partitioned as $C_1 \cup C_2 \cup D$. Assume that $|C_1| =|C_2|$ and that the induced subgraphs on $C_1, C_2$, and $C_1 \cup C_2$ are regular, where the degrees in the induced subgraphs on $C_1$ and $C_2$ are the same. Suppose that all $x\in D$ satisfy one of the following:
\begin{enumerate}
	\item $|\Gamma(x) \cap C_1| = |\Gamma(x) \cap C_2|$, or
    \item $\Gamma(x) \cap (C_1 \cup C_2)| \in \{C_1,C_2\}$.
\end{enumerate}
Construct a graph $\Gamma'$ from $\Gamma$ by modifying the edges between $C_1 \cup C_2$ and $D$ as follows:
\begin{equation*}
\Gamma'(x)\cap(C_1 \cup C_2) =
\begin{cases}
C_1, & \text{if } \Gamma(x)\cap(C_1 \cup C_2) = C_2;\\
C_2, & \text{if } \Gamma(x)\cap(C_1 \cup C_2) = C_1;\\
\Gamma(x)\cap(C_1 \cup C_2), & \text{otherwise},
\end{cases}
\end{equation*}
for all $x \in D$. Then $\Gamma'$ is cospectral with $\Gamma$.
\end{lemma}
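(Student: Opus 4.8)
The plan is to show that $\Gamma$ and $\Gamma'$ have the same spectrum by exhibiting a similarity (or orthogonal conjugacy) between their adjacency matrices. The natural approach is to write the adjacency matrix of $\Gamma$ in block form according to the partition $V(\Gamma) = C_1 \cup C_2 \cup D$, and to find an invertible matrix $Q$ (ideally orthogonal, or at least one whose conjugation action is easy to track) such that $Q^{-1} A(\Gamma) Q = A(\Gamma')$. Since switching only affects the edges between $C_1 \cup C_2$ and $D$, I expect $Q$ to act as the identity on the $D$-block and to act on the $C_1 \cup C_2$ coordinates by an involution that exchanges $C_1$ with $C_2$.

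More concretely, first I would set up notation: let $A(\Gamma)$ be partitioned into nine blocks indexed by $C_1, C_2, D$. The regularity hypotheses guarantee that the induced blocks on $C_1$, $C_2$ are $r_1$-regular with the same degree, and the $C_1$--$C_2$ block together with these makes the induced subgraph on $C_1 \cup C_2$ regular; because $|C_1| = |C_2|$, the all-ones vector restricted to $C_1$ and to $C_2$ behaves symmetrically. The key device is the orthogonal matrix
\begin{equation*}
Q = \begin{pmatrix} \tfrac{1}{n}(2J - nI) & \tfrac{2}{n}J & 0 \\ \tfrac{2}{n}J & \tfrac{1}{n}(2J - nI) & 0 \\ 0 & 0 & I \end{pmatrix},
\end{equation*}
where $n = |C_1| = |C_2|$ and $J$ is the all-ones matrix of the appropriate size, which is the form used in the Wang--Qiu--Hu and Abiad--Brimkov--style switchings. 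One checks directly that $Q$ is symmetric and orthogonal ($Q^2 = I$). Then I would compute $Q A(\Gamma) Q$ block by block and verify it equals $A(\Gamma')$.

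The two hypotheses on vertices $x \in D$ are exactly what make this conjugation send a $0/1$ matrix to another $0/1$ matrix realizing the prescribed switching: when $|\Gamma(x) \cap C_1| = |\Gamma(x) \cap C_2|$ the corresponding rows are fixed (the off-diagonal $J$-terms cancel against the diagonal correction), while when $\Gamma(x) \cap (C_1 \cup C_2)$ equals $C_1$ or $C_2$ the conjugation swaps the indicator of $C_1$ with that of $C_2$, matching the case analysis in the statement. Crucially, one also uses that the induced subgraph on $C_1 \cup C_2$ is regular with equal $C_1$- and $C_2$-degrees so that the top-left $2\times 2$ block of $A(\Gamma)$ is fixed by conjugation by the top-left block of $Q$.

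I expect the main obstacle to be the verification that $Q A(\Gamma) Q$ has entries in $\{0,1\}$ and agrees precisely with the edge-modification rule for every $x \in D$; this is where the hypotheses must be used with care, since a vertex satisfying neither condition would produce fractional entries and break the argument. The regularity of the three induced subgraphs is what forces the products $J M$ and $M J$ (for the various blocks $M$) to be multiples of $J$, so that all the non-integer contributions either cancel or reorganize into a genuine adjacency pattern. Once the block computation is complete and the entries are confirmed to be $0/1$, cospectrality follows immediately because $Q$ is orthogonal and hence $A(\Gamma')$ is similar to $A(\Gamma)$.
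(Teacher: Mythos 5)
Your overall plan---conjugating $A(\Gamma)$ by a symmetric orthogonal involution that acts as the identity on $D$ and mixes the $C_1$ and $C_2$ coordinates---is exactly how this lemma is proved in the sources the paper cites (the paper itself gives no proof; it imports the result from \cite{WQH19}, and the argument there and in \cite{IM19} is the one you sketch). However, the specific matrix you propose is wrong, and the step you describe as ``one checks directly that $Q$ is symmetric and orthogonal ($Q^2=I$)'' is precisely where the proof breaks. With $n=|C_1|=|C_2|$, your diagonal blocks are $A=\tfrac{2}{n}J-I$ and your off-diagonal blocks are $B=\tfrac{2}{n}J$ (both $n\times n$). Using $J^2=nJ$ one finds
\begin{equation*}
A^2+B^2=I+\tfrac{4}{n}J\neq I,
\qquad
AB+BA=\tfrac{4}{n}J\neq 0,
\end{equation*}
so $Q^2\neq I$ and $Q$ is not orthogonal. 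It also fails to realize the switching: applied to the characteristic vector of $C_1$ it returns the vector equal to $1$ on $C_1$ and $2$ on $C_2$, not the characteristic vector of $C_2$. What you have written is essentially the Godsil--McKay matrix $\tfrac{2}{m}J_m-I_m$ misplaced into the WQH block structure; even if one reads your $n$ as $|C_1\cup C_2|$, that matrix is orthogonal but implements GM-switching (complementation of rows meeting the block in exactly half its vertices), whereas the WQH rule must \emph{fix} every row with $|\Gamma(x)\cap C_1|=|\Gamma(x)\cap C_2|$ --- a condition that does not force the intersection to be half of each part, so conjugation by your matrix would produce fractional entries.

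The matrix that works is
\begin{equation*}
\widetilde{Q}=\begin{pmatrix} I-\tfrac{1}{n}J & \tfrac{1}{n}J\\ \tfrac{1}{n}J & I-\tfrac{1}{n}J\end{pmatrix},
\qquad
Q=\begin{pmatrix}\widetilde{Q} & 0\\ 0 & I\end{pmatrix},
\end{equation*}
with $n=|C_1|=|C_2|$. Here $\widetilde{Q}$ is symmetric, and $(I-\tfrac{1}{n}J)^2+(\tfrac{1}{n}J)^2=I$ together with $(I-\tfrac{1}{n}J)\tfrac{1}{n}J+\tfrac{1}{n}J(I-\tfrac{1}{n}J)=0$ gives $\widetilde{Q}^2=I$. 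Moreover, $\widetilde{Q}$ sends a column vector $(b_1,b_2)$ with sums $s_1,s_2$ to $\bigl(b_1+\tfrac{s_2-s_1}{n}\mathbf{1},\, b_2+\tfrac{s_1-s_2}{n}\mathbf{1}\bigr)$, so it fixes the columns of vertices $x\in D$ of the first type and swaps the characteristic vectors of $C_1$ and $C_2$, matching the stated edge-modification rule exactly. Finally, if $M$ is the adjacency block on $C_1\cup C_2$, $r_1$ the common degree inside $C_1$ and $C_2$, and $r$ the degree inside $C_1\cup C_2$, the regularity hypotheses give $JM_{11}=M_{11}J=r_1J$, $JM_{22}=M_{22}J=r_1J$ and $JM_{12}=M_{12}J=(r-r_1)J$, from which $\widetilde{Q}M\widetilde{Q}=M$ follows in two lines. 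With this replacement, the rest of your outline (the $0/1$ verification and the conclusion $A(\Gamma')=QA(\Gamma)Q$, hence cospectrality) goes through as you intended.
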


\section{A general construction of strictly Neumaier graphs with 1-regular cliques}\label{sec:GeneralConstruction}

In this section we present a construction of Neumaier graphs with 1-regular cliques, which generalises the constructions found in \cite{GK18} and \cite{GK19}. We note that our construction was first introduced in the PhD thesis of R. J. Evans, the first author of this paper (see \cite[Theorem 5.1]{E20}).



Let $\Gamma^{(1)},\dots,\Gamma^{(t)}$ be edge-regular graphs with parameters $(v,k,\lambda)$, and such that each $\Gamma^{(i)}$ has a partition of its vertices into perfect $1$-codes of size $a$, where $a=(\lambda+2)/t$.

For any $\ell\in \{1,\ldots, t\}$, let $H_{1}^{(\ell)},\dots,H_{v/a}^{(\ell)}$ denote the perfect $1$-codes that partition the vertex set of $\Gamma^{(\ell)}$.


If $t\geq 2$, we also take  a $(t-1)$-tuple of permutations from
$\text{Sym}(\{1, \ldots, v/a \})$, denoted by $\Pi =(\pi_{2},\dots,\pi_{t})$. 

Using these graphs and the permutation $\Pi$, we define the graph $F_\Pi(\Gamma^{(1)},\ldots,\Gamma^{(t)})$ as follows.
\begin{enumerate}
  \item Take the disjoint union of the graphs $\Gamma^{(1)},\ldots,\Gamma^{(t)}$.
  \item For any $i \in \{1,\ldots,v/a\}$, add an edge between any two distinct vertices from $H^{(1)}_i \cup H^{(2)}_{\pi_2(i)} \cup \ldots \cup H^{(t)}_{\pi_t(i)}$ (which forms a $1$-regular clique of size $at$).
\end{enumerate}
Note that for $t=1$, our definition of $F_\Pi(\Gamma^{(1)},\ldots,\Gamma^{(t)})=F_\Pi(\Gamma^{(1)})$ does not use $\Pi$, so we do not need to choose $\Pi$ to construct our graph.

Now we show that the graph $F_\Pi(\Gamma^{(1)},\ldots,\Gamma^{(t)})$ is a Neumaier graph, and express the parameters of this graph in terms of the parameters of the original graphs, $t$ and $a$. 

\begin{theorem}\label{GeneralConstruction}
Let $\Gamma^{(1)},\dots,\Gamma^{(t)}$ be edge-regular graphs with parameters $(v,k,\lambda)$ and let $\Pi =(\pi_{2},\dots,\pi_{t})$ be a $(t-1)$-tuple of permutations from
{\rm Sym}$(\{1, \ldots, v/a \})$ when $t\geq 2$.

Further, suppose that each graph $\Gamma^{(\ell)}$ has a partition of its vertices into perfect $1$-codes $H_{1}^{(\ell)},\dots,H_{v/a}^{(\ell)}$, each of size $a$, where $a=(\lambda+2)/t.$

Then
\begin{enumerate}
	\item $F_{\Pi}(\Gamma^{(1)},\dots,\Gamma^{(t)})$ has (a spread of) $1$-regular cliques, each of size $\lambda+2$;
	\item $F_{\Pi}(\Gamma^{(1)},\dots,\Gamma^{(t)})$ is an edge-regular graph with parameters $(vt,k+\lambda+1,\lambda).$
\end{enumerate}
\end{theorem}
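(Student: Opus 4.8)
The plan is to extract two elementary properties of any perfect $1$-code $C$ in a graph and then feed them into a case analysis. From the defining property that the closed balls of radius $1$ centred at the code vertices partition the vertex set, one reads off: (a) distinct code vertices lie at distance at least $3$, so $C$ is an independent set and any two distinct vertices of $C$ have \emph{no} common neighbour; and (b) every vertex outside $C$ has \emph{exactly one} neighbour in $C$. These are the only facts about codes I will need.

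For part (1), write $C_i = H^{(1)}_i \cup H^{(2)}_{\pi_2(i)} \cup \cdots \cup H^{(t)}_{\pi_t(i)}$, adopting the convention $\pi_1 = \mathrm{id}$. Since each $\pi_\ell$ is a permutation, as $i$ ranges over $\{1,\ldots,v/a\}$ the sets $H^{(\ell)}_{\pi_\ell(i)}$ run through all codes of $\Gamma^{(\ell)}$; hence the $C_i$ partition $V(F_\Pi)$, and each is by construction a clique of size $at = \lambda+2$, giving the desired spread. To see that each $C_i$ is $1$-regular, take $x \notin C_i$, say $x \in \Gamma^{(\ell)}$ and $x \in C_j$ with $j \neq i$. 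The only edges of $F_\Pi$ are the original edges inside each $\Gamma^{(m)}$ and the clique edges inside the various $C_r$; as $x$ and every vertex of $C_i$ lie in different cliques, $x$ can only be adjacent to the part of $C_i$ inside $\Gamma^{(\ell)}$, namely $H^{(\ell)}_{\pi_\ell(i)}$. Because $\pi_\ell(j)\neq\pi_\ell(i)$ we have $x \notin H^{(\ell)}_{\pi_\ell(i)}$, so property (b) gives $x$ exactly one neighbour in $C_i$, i.e. nexus $1$.

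For part (2), the vertex count $vt$ is immediate. For the degree, a vertex $x \in \Gamma^{(\ell)}$ lying in the clique $C_p$ retains its $k$ original neighbours and acquires edges to the remaining $\lambda+1$ vertices of $C_p$; these are genuinely new, since the $a-1$ of them inside $\Gamma^{(\ell)}$ share a code with $x$ (an independent set by (a)) and the others lie in different graphs. Hence every degree equals $k+\lambda+1$.

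The substance of the argument, and the step I expect to be the main obstacle, is showing that each edge has exactly $\lambda$ common neighbours. I would split edges into \emph{clique edges} and \emph{original edges} and in each case classify a potential common neighbour $z$ by the graph and clique containing it. For a clique edge $xy$ with $x,y \in C_p$, the $\lambda$ other vertices of $C_p$ are automatically common neighbours; any further common neighbour would be joined to both $x$ and $y$ by original edges, which is impossible when $x,y$ lie in different graphs, and impossible when they lie in the same code by property (a). For an original edge $xy$ inside $\Gamma^{(\ell)}$ (so $x,y$ lie in distinct codes, and in distinct cliques $C_p, C_q$), the $\lambda$ common neighbours already present in $\Gamma^{(\ell)}$ contribute, and the delicate point is to show that the remaining candidates contribute nothing: a vertex of $\Gamma^{(\ell)}$ adjacent to $x$ originally and to $y$ through the clique $C_q$ must be a neighbour of $x$ lying in $y$'s code, hence equals $y$ by property (b); symmetrically on the other side; and a candidate in another graph $\Gamma^{(m)}$ would have to lie in $C_p\cap C_q$, which is empty. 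This collapse of all cross terms, forced by the uniqueness in property (b), is exactly where the perfect-code hypothesis does the real work, and combining the two cases yields edge-regularity with parameters $(vt,\,k+\lambda+1,\,\lambda)$.
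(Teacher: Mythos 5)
Your proposal is correct and takes essentially the same route as the paper's own proof: build the spread of cliques, get $1$-regularity from the perfect-code property, count degrees as $k+(a-1)+a(t-1)=k+\lambda+1$, and split the $\lambda$-count into clique edges (where $1$-regularity/disjointness kills outside common neighbours) versus original edges (where the uniqueness of the neighbour in a perfect $1$-code forces any would-be new common neighbour to equal $x$ or $y$). The only hairline omission is the sub-case of a candidate $z\in\Gamma^{(\ell)}$ joined to both $x$ and $y$ by clique edges, but your observation that $C_p\cap C_q=\emptyset$ disposes of it verbatim.
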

\begin{proof}
1. For $i\in \{1,\ldots, t\}$, consider the union of the sets $H_{i}^{(1)},H_{\pi_{2}(i)}^{(2)},\dots,H_{\pi_{t}(i)}^{(t)}$. As $H_{i}^{(j)}$ is a perfect $1$-code in $\Gamma^{(i)}$, every vertex in $V(\Gamma^{(i)})\setminus H_{i}^{(j)}$ is adjacent to a unique element of $H_{i}^{(j)}$. Therefore, each vertex of $F_{\Pi}(\Gamma^{(1)},\dots,\Gamma^{(t)})$ is adjacent to a unique vertex of $H_{i}^{(1)}\cup H_{\pi_{2}(i)}^{(2)}\cup \dots\cup H_{\pi_{t}(i)}^{(t)}$. After adding all possible edges in $H_{i}^{(1)}\cup H_{\pi_{2}(i)}^{(2)}\cup\dots\cup H_{\pi_{t}(i)}^{(t)}$, we see that this set is a $1$-regular clique.
	
2. The graph $F_{\Pi}(\Gamma^{(1)},\dots,\Gamma^{(t)})$ has $vt$ vertices by definition. 

Let $w$ be a vertex in $F_{\Pi}(\Gamma^{(1)},\dots,\Gamma^{(t)})$, where $w\in H_{i}^{(j)}$ for some $i,j$. Note that $w$ is adjacent to $k$ vertices of $\Gamma^{(j)}$ before adding the edges of the construction. Then $w$ gains $a-1$ new neighbours from $H_{i}^{(j)}$, and $a$ new neighbours from $H_{\pi_{m}(\pi^{-1}_{j}(i))}^{(m)}$, for each $m\not=j$ (where we take $\pi_1$ as the identity element in $\text{Sym}(\{1, \ldots, v/a \})$). Therefore, the degree of $w$ is $k+a-1+a(t-1)=k+\lambda+1$. 





Now let $u,w$ be adjacent vertices in $F_{\Pi}(\Gamma^{(1)},\dots,\Gamma^{(t)})$.  First we suppose $u,w$ lie in the same clique $H_{i}^{(1)} \cup H_{\pi_{2}(i)}^{(2)},\dots \cup H_{\pi_{t}(i)}^{(t)}$, for some $i$. As this clique is $1$-regular, $u,w$ must have $at-2=\lambda$ common neighbours. Otherwise by definition of $F_{\Pi}(\Gamma^{(1)},\dots,\Gamma^{(t)})$,  there must be integers $m,n$ and $j$ such that $m\not=n$ and $u\in H_{m}^{(j)},w\in H_{n}^{(j)}$. As $H_{m}^{(j)},H_{n}^{(j)}$ are disjoint perfect codes in $\Gamma^{(j)}$, there are no new common neighbours of $u,w$ coming from adding the edges in the construction of the graph. Therefore, all adjacent vertices have $\lambda$ common neighbours.
\end{proof}






In \cite{S15}, the converse of this theorem is proven as a special case of a general result. We present this case in detail in the following theorem.

\begin{theorem}
    Let $\Gamma\in \text{NG}(v,k,\lambda;1,s)$ have a spread of $1$-regular cliques. Further, let $\Gamma^{\circ}$ be the graph created by removing all the edges in $\Gamma$ that lie in a clique of the spread, and let $t$ be the number of connected components of $\Gamma^{\circ}$. Then
    \begin{enumerate}
        \item $\Gamma^{\circ}$ is edge-regular with parameters $(v,k-\lambda-1,\lambda);$
        \item Each connected component has the same number of vertices, and can be partitioned into perfect $1$-codes of size $a=(\lambda+2)/t$.
    \end{enumerate} 
\end{theorem}
\begin{proof}
     As each clique in a spread is 1-regular, distinct vertices in a clique of $\Gamma$ cannot have a shared neighbour outside of the clique. Therefore, $s=\lambda+2$ and the result is a simple corollary of L. Soicher \cite[Theorem 6.1]{S15}.
\end{proof}
\begin{remark}
    The previous two results show that all Neumaier graphs with a spread of 1-regular cliques come from our construction. For example, the construction found in \cite{GK19} uses isomorphic distance-regular graphs as the underlying edge-regular graphs, and antipodal classes in the graphs are the perfect 1-codes found in our generalisation, for $t\ge 2$. The construction found in \cite{GK18} uses Cayley graphs with particular choices of generating sets that force the graphs to be Neumaier graphs. All of these generating sets contain all non-identity members of a subgroup, the cosets of which define a partition into perfect 1-codes in the graphs obtained by removing these elements from the generating set. These Cayley graphs are examples of our construction with $t=1$.
\end{remark}

In Section \ref{sec:Examples}, we show that the case for which $t=1$ can occur, and the construction can result in a strictly Neumaier graph. However, this is not necessarily true in all cases when $t=1$. The following corollary shows that for $t\geq 2$, our construction always results in a strictly Neumaier graph.

\begin{corollary}\label{cor:t2strict}
Let $\Gamma^{(1)},\dots,\Gamma^{(t)}$ be non-complete edge-regular graphs with parameters $(v,k,\lambda)$ and let $\Pi =(\pi_{2},\dots,\pi_{t})$ be a $(t-1)$-tuple of permutations from
{\rm Sym}$(\{1, \ldots, v/a \})$.

Further, suppose that each graph $\Gamma^{(\ell)}$ has a partition of its vertices into perfect $1$-codes $H_{1}^{(\ell)},\dots,H_{v/a}^{(\ell)}$, each of size $a$, where $a=(\lambda+2)/t.$

If $t\geq 2$, then 	$F_\Pi(\Gamma^{(1)},\ldots,\Gamma^{(t)})$ is a strictly Neumaier graph.
\end{corollary}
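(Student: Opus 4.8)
The plan is to build on Theorem \ref{GeneralConstruction}, which already tells us that $F:=F_{\Pi}(\Gamma^{(1)},\dots,\Gamma^{(t)})$ is edge-regular with parameters $(vt,\,k+\lambda+1,\,\lambda)$ and carries a spread of $1$-regular cliques of size $\lambda+2$. Since each $\Gamma^{(\ell)}$ is non-empty (being edge-regular) and each perfect $1$-code is an independent set, the whole vertex set of $\Gamma^{(\ell)}$ is not a single code, so $v/a\geq 2$ and the spread contains at least two cliques; in particular $F$ has non-adjacent vertices and is non-complete. Hence $F$ is already a Neumaier graph, and it remains only to show that $F$ is \emph{not} strongly regular. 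As $F$ is edge-regular, strong regularity is equivalent to co-edge-regularity, so I would prove that $F$ fails to be co-edge-regular by exhibiting two pairs of non-adjacent vertices with different numbers of common neighbours.

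First I would compute the common neighbours of a ``cross-graph'' non-adjacent pair. Take $u\in H^{(1)}_i$ and $w\in H^{(2)}_j$ lying in \emph{different} cliques of the spread, which is possible because $v/a\geq 2$. Using only that each code is a perfect $1$-code, one checks that a common neighbour of $u$ and $w$ must lie in $\Gamma^{(1)}$ or in $\Gamma^{(2)}$: inside $\Gamma^{(1)}$ the only candidate is the unique neighbour of $u$ in the code of $\Gamma^{(1)}$ belonging to the clique of $w$, and symmetrically inside $\Gamma^{(2)}$; the codes of the remaining graphs $\Gamma^{(m)}$ contribute nothing, since $u$ and $w$ lie in different cliques and the corresponding $\Gamma^{(m)}$-codes are disjoint. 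This yields exactly $2$ common neighbours for every such pair.

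Next I would treat a ``same-graph'' non-adjacent pair $u,w\in\Gamma^{(1)}$ that lie in distinct codes and are non-adjacent in $\Gamma^{(1)}$. Because the codes are independent and new edges are added only within cliques, the common neighbours of $u$ and $w$ in $F$ are exactly their common neighbours in the original graph $\Gamma^{(1)}$ together with precisely two clique-induced vertices: the unique neighbour of $w$ in the code of $u$, and the unique neighbour of $u$ in the code of $w$ (both distinct from the original common neighbours, which avoid the two codes). Thus such a pair has $\mu_{\Gamma^{(1)}}(u,w)+2$ common neighbours, where $\mu_{\Gamma^{(1)}}(u,w)$ counts their common neighbours in $\Gamma^{(1)}$. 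Consequently, as soon as I can exhibit one such pair with $\mu_{\Gamma^{(1)}}(u,w)\geq 1$, I obtain a non-adjacent pair with at least $3$ common neighbours, contradicting the value $2$ forced above, so $F$ is not co-edge-regular and the proof concludes.

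The main obstacle is exactly the existence of such a pair. A vertex $u$ of a connected, non-complete, edge-regular graph always has a vertex at distance $2$ (indeed $\lambda\le k-2$, since otherwise every closed neighbourhood is a clique and the graph is complete), and any distance-$2$ mate of $u$ lying outside the size-$a$ code of $u$ supplies the required pair. The delicate point is ensuring that not \emph{all} distance-$2$ vertices are trapped inside $u$'s own code, which is where the combinatorics of the code partition must be invoked. I expect the genuinely hard case to be when some $\Gamma^{(\ell)}$ is a disjoint union of $(\lambda+2)$-cliques (the case $\lambda=k-1$): there no distance-$2$ vertices exist, every non-adjacent pair in $F$ turns out to have exactly $2$ common neighbours, and the construction can in fact produce a \emph{strongly regular} graph, for instance a lattice (rook's) graph $L_2(k+1)$. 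This indicates that the hypotheses must effectively exclude disjoint unions of cliques -- e.g.\ that each $\Gamma^{(\ell)}$ be connected -- and that controlling this degenerate case is the crux on which the corollary turns.
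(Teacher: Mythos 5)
Your two counting arguments are exactly the paper's proof of this corollary. The paper likewise takes $x\in H^{(1)}_{i_1}$, $y\in H^{(2)}_{i_2}$ with $i_2\neq\pi_2(i_1)$ and counts precisely two common neighbours, and then takes $u,z\in V(\Gamma^{(1)})$ at distance $2$ in $\Gamma^{(1)}$, observes they lie in distinct codes $H^{(1)}_m, H^{(1)}_n$, and counts at least three common neighbours (at least one surviving from $\Gamma^{(1)}$, plus one in $H^{(1)}_m$ and one in $H^{(1)}_n$ coming from the added clique edges). All of these computations in your proposal are correct and match the paper.

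The ``delicate point'' you isolate is, however, a genuine gap --- in the paper, not in your reasoning. The paper justifies the existence of the distance-$2$ pair with the parenthetical ``which exists as $\Gamma^{(1)}$ is not complete'', but non-completeness only yields a non-adjacent pair; a pair at distance exactly $2$ exists if and only if $\Gamma^{(1)}$ is not a disjoint union of cliques, and edge-regular graphs may well be disconnected. Your suspected counterexample is valid and shows the corollary is false as stated: take $\Gamma^{(1)}=\Gamma^{(2)}=2K_4$, which is non-complete and edge-regular with $(v,k,\lambda)=(8,3,2)$, partitioned into the four perfect $1$-codes consisting of one vertex from each $K_4$, so that $a=2$ is a proper divisor of $\lambda+2=4$ and $t=2$. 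For any choice of $\pi_2$, the graph $F_{\Pi}(2K_4,2K_4)$ is the $4\times 4$ rook's graph $L_2(4)$ (the four original $K_4$'s are the rows, the four spread cliques are the columns), which is strongly regular with parameters $(16,6,2,2)$: a Neumaier graph, but not a strictly Neumaier graph. The same collapse happens whenever every $\Gamma^{(\ell)}$ is a disjoint union of $(\lambda+2)$-cliques, always producing $L_2(\lambda+3)$. So the statement (and the paper's proof) can only be repaired by adding the hypothesis you propose: that some $\Gamma^{(\ell)}$ is not a disjoint union of cliques --- equivalently, contains two vertices at distance $2$ --- for which connectedness of a non-complete $\Gamma^{(\ell)}$ is sufficient. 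With that hypothesis added, your argument is complete; your instinct to treat this degenerate case as the crux, rather than wave it away, is exactly right.
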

\begin{proof}
By Theorem \ref{GeneralConstruction}, we know that $F_\Pi(\Gamma^{(1)},\ldots,\Gamma^{(t)})$ is a Neumaier graph. Therefore, we need to prove that  $F_\Pi(\Gamma^{(1)},\ldots,\Gamma^{(t)})$ is not a co-edge-regular graph. 

Consider the subgraphs $\Gamma^{(1)}$ and $\Gamma^{(2)}$ of $F_\Pi(\Gamma^{(1)},\ldots,\Gamma^{(t)})$. Let $x \in H^{(1)}_{i_1}$ and $y \in H^{(2)}_{i_2}$, where $i_2 \ne \pi_2(i_1)$. Then $x$ and $y$ have exactly two common neighbours in $F_\Pi(\Gamma^{(1)},\ldots,\Gamma^{(t)})$: one common neighbour in $H^{(2)}_{\pi_2(i_1)}$ and one common neighbour in $H^{(1)}_{i_3}$, where $\pi_2(i_3) = i_2$.





Now consider the subgraph $\Gamma^{(1)}$ of $F_\Pi(\Gamma^{(1)},\ldots,\Gamma^{(t)})$. Let $u,z\in V(\Gamma^{(1)})$ be vertices at distance 2 in $\Gamma^{(1)}$ (which exists as $\Gamma^{(1)}$ is not complete). Note that this means $u$ and $z$ belong to different perfect 1-codes that partition $\Gamma^{(1)}$. Let $m,n$ be the distinct integers such that $u \in H^{(1)}_{m},z \in H^{(1)}_{n}$. Since $u,z$ are at distance 2 in $\Gamma^{(1)}$, they have at least one common neighbour in $\Gamma^{(1)} \setminus (H^{(1)}_{m} \cup H^{(1)}_{n})$. Moreover, in $F_\Pi(\Gamma^{(1)},\ldots,\Gamma^{(t)})$, the vertices $u$ and $z$ have one common neighbour in $H^{(1)}_{m}$ and one common neighbour in $H^{(1)}_{n}$. Thus, $u$ and $z$ have at least three common neighbours in $F_\Pi(\Gamma^{(1)},\ldots,\Gamma^{(t)})$.

We have shown that in $F_\Pi(\Gamma^{(1)},\ldots,\Gamma^{(t)})$,  there is a pair of non-adjacent vertices with exactly $2$ common neighbours and a pair of non-adjacent vertices with at least $3$ common neighbours. Therefore, $F_\Pi(\Gamma^{(1)},\ldots,\Gamma^{(t)})$ is not a co-edge-regular graph.
\end{proof}

In Theorem \ref{GeneralConstruction}, we can use an arbitrary $(t-1)$-tuple of permutations $\Pi$. In the following result, we show that a certain WQH-switching in our graphs is equivalent to a certain change of tuple $\Pi$. We note that for $t=1$, our choice of partition also satisfies the criteria for a switching in Lemma \ref{WQHswitching}, but the switching operation does not change the graph.

\begin{proposition}\label{Switching}
Let $\Gamma^{(1)},\dots,\Gamma^{(t)}$ be edge-regular graphs with parameters $(v,k,\lambda)$ and let $\Pi =(\pi_{2},\dots,\pi_{t})$ be a $(t-1)$-tuple of permutations from
$\text{Sym}(\{1, \ldots, v/a \})$ when $t\geq 2$.

Further, suppose that each graph $\Gamma^{(\ell)}$ has a partition of its vertices into perfect $1$-codes $H_{1}^{(\ell)},\dots,H_{v/a}^{(\ell)}$, each of size $a$, where $a=(\lambda+2)/t.$

Then for any non-empty subset $I\subseteq \{2,\dots,t\}$ and distinct $i,j\in \{1,\dots,v/a\}$, the partition $$C_1:= H^{(1)}_{i}\cup \left(\bigcup_{\ell\in I} H^{(\ell)}_{\pi_{\ell}(i)}\right),$$
$$C_2:=H^{(1)}_{j}\cup \left(\bigcup_{\ell\in I} H^{(\ell)}_{\pi_{\ell}(j)}\right),$$
and
$$D:=V(F_\Pi(\Gamma^{(1)},\ldots,\Gamma^{(t)})) \setminus (C_1\cup C_2 )$$
satisfies the conditions of Lemma \ref{WQHswitching}. 




In fact, we have the equality $(F_\Pi(\Gamma^{(1)},\ldots,\Gamma^{(t)}))' = F_{\Pi'}(\Gamma^{(1)},\ldots,\Gamma^{(t)})$,
where 
\begin{equation*}
(\Pi')_r =\begin{cases}
\pi_r & \text{if } r\in I \\
(i~j)\circ \pi_r & \text{if } r\not\in I
\end{cases}
\end{equation*}
\end{proposition}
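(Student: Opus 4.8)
My plan is to prove the two assertions in turn: that $(C_1,C_2,D)$ satisfies the hypotheses of Lemma \ref{WQHswitching}, and that the resulting switch is the graph $F_{\Pi'}(\Gamma^{(1)},\ldots,\Gamma^{(t)})$. Throughout I would use two structural features of $F_\Pi(\Gamma^{(1)},\ldots,\Gamma^{(t)})$: after the construction every perfect $1$-code $H^{(\ell)}_q$ is a clique of size $a$, and the $t$ codes forming one clique of the spread together make a clique of size $\lambda+2$; and, by the defining property of a perfect $1$-code, any vertex outside $H^{(\ell)}_q$ has exactly one neighbour inside it, so that within a single $\Gamma^{(\ell)}$ the edges between two distinct codes form a perfect matching.

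To check the hypotheses of Lemma \ref{WQHswitching}, I would first observe that the codes comprising $C_1$ (respectively $C_2$) all lie in one clique of the spread, namely the one containing $H^{(1)}_i$ (respectively $H^{(1)}_j$); hence $C_1$ and $C_2$ are cliques with $|C_1|=|C_2|=a|I|$, and the induced subgraphs on them are $(a|I|-1)$-regular. Since $C_1$ and $C_2$ lie in different spread cliques, no construction edge joins them, so the only edges between them are the within-$\Gamma^{(\ell)}$ matchings for $\ell\in I$, giving each vertex exactly one cross neighbour; thus the induced subgraph on $C_1\cup C_2$ is $a|I|$-regular. For the vertices of $D$ I would argue by position: if $x$ lies in the part of the clique containing $H^{(1)}_i$ (respectively $H^{(1)}_j$) that comes from some $\Gamma^{(r)}$ with $r\notin I$, then $x$ is completely joined to $C_1$ (respectively $C_2$) and has no neighbour in $C_2$ (respectively $C_1$), since the latter meets no graph containing $x$; so condition (2) holds. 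For every other $x\in D$ the spread clique of $x$ avoids $C_1\cup C_2$, so every neighbour of $x$ there comes from a code-matching, whence $|\Gamma(x)\cap C_1|=|\Gamma(x)\cap C_2|$ and condition (1) holds.

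To identify the switched graph, I would note that the vertices actually switched are exactly those parts of the two cliques (containing $H^{(1)}_i$ and $H^{(1)}_j$) that come from the graphs $\Gamma^{(r)}$ with $r\notin I$. After switching, a part formerly completely joined to $C_1$ becomes completely joined to $C_2$ and conversely, while every edge not incident to these parts is untouched; in particular each spread clique indexed by $p\neq i,j$ and all edges internal to the $\ell\in I$ codes are preserved. Reassembling, the clique containing $H^{(1)}_i$ keeps its $\ell\in I$ parts but trades its $r\notin I$ parts with those of the clique containing $H^{(1)}_j$; this is precisely the effect of composing each $\pi_r$ with $r\notin I$ with the transposition $(i~j)$, that is, of passing from $\Pi$ to $\Pi'$. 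As the new graph is again of $F$-type with tuple $\Pi'$ and no edges are otherwise altered, it equals $F_{\Pi'}(\Gamma^{(1)},\ldots,\Gamma^{(t)})$.

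I expect the last step to be the main obstacle: one must verify that switching creates no unintended edges and that the recombined cliques are genuinely the spread cliques of $F_{\Pi'}$, which reduces to careful permutation bookkeeping — in particular fixing the order in which $(i~j)$ is composed with $\pi_r$ and confirming that the $r\notin I$ parts of the two cliques land in the intended new cliques. The regularity of $C_1\cup C_2$ is also delicate, as it relies entirely on the matching structure of the second feature; without the perfect $1$-code hypothesis the number of edges joining each vertex to the opposite side need not be constant, so this is precisely the point at which that hypothesis is used.
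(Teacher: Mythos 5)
Your proof is correct and follows essentially the same route as the paper: a case analysis of the vertices of $D$ according to their position relative to the two spread cliques meeting $C_1$ and $C_2$, followed by reassembling the switched parts to identify the result as $F_{\Pi'}(\Gamma^{(1)},\ldots,\Gamma^{(t)})$. You are in fact slightly more thorough than the paper, which does not explicitly verify the regularity of the induced subgraphs on $C_1$, $C_2$ and $C_1\cup C_2$ and asserts the equality $(F_\Pi(\Gamma^{(1)},\ldots,\Gamma^{(t)}))' = F_{\Pi'}(\Gamma^{(1)},\ldots,\Gamma^{(t)})$ without spelling out the clique-trading argument.
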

\begin{proof}
	Let $\Gamma=F_\Pi(\Gamma^{(1)},\ldots,\Gamma^{(t)})$. We have the following cases for vertices $u\in D$:
	\begin{enumerate}
		\item $u\in H_{\pi_m(i)}^{(m)}$ for some $m\not\in I$. By the definition of our construction,\linebreak $\Gamma(u)\cap (C_1 \cup C_2) = C_1$.
		\item $u\in H_{\pi_m(j)}^{(m)}$ for some $m\not\in I$. By the definition of our construction,\linebreak $\Gamma(u)\cap (C_1 \cup C_2) = C_2$.
		\item $u\in H_{\pi_m(h)}^{(m)}$ for some $m,h$ where $m\not\in I,h\not\in \{i,j\}$. This means that $u$ is not from any $\Gamma^{(\ell)}$ for $\ell\in I$, or the cliques containing $C_1$ or $C_2$ that are created by our construction. Therefore, $|\Gamma(u)\cap C_1|=|\Gamma(u)\cap C_2|=0$.
		\item $u\in H_h^{(m)}$ for some $m\in I,h\not\in \{i,j\}$. As the set $H_h^{(m)}$ is a perfect 1-code in $\Gamma^{(m)}$, we have $|\Gamma(u)\cap C_1|=|\Gamma(u)\cap C_2|=1$.
	\end{enumerate}
	We have shown that the partition satisfies the conditions of Lemma \ref{WQHswitching}. From the cases for $u\in D$ above and the definition of WQH-switching, it also follows that $(F_\Pi(\Gamma^{(1)},\ldots,\Gamma^{(t)}))' = F_{\Pi'}(\Gamma^{(1)},\ldots,\Gamma^{(t)})$
	with the stated $\Pi'$.
\end{proof}

Given a certain graph property, it is common in spectral graph theory to ask if this property is determined by the spectrum of a graph with this property. In Proposition \ref{Switching}, we have seen that we have given a method for constructing cospectral Neumaier graphs. Therefore, we can investigate if (strictly) Neumaier graphs with a 1-regular clique are determined by their spectra. The following shows that this is not true for strictly Neumaier graphs with 1-regular cliques.

From Proposition \ref{Switching}, we can see that for any two $(t-1)$-tuples of elements of $\text{Sym}(\{1,\dots,v/a\})$, $\Pi,\Pi'$, we can obtain $F_{\Pi'}(\Gamma^{(1)},\ldots,\Gamma^{(t)})$ from  $F_\Pi(\Gamma^{(1)},\ldots,\Gamma^{(t)})$ by applying a series of appropriate WQH-switchings. As the switching operation does not change the spectrum of the resulting graph, we get the following result.

\begin{corollary}
	For any $\Pi,\Pi'$, $(t-1)$-tuples of elements of $\text{Sym}(\{1,\dots,v/a\})$, the graphs
    $F_\Pi(\Gamma^{(1)},\ldots,\Gamma^{(t)})$ and 	$F_{\Pi'}(\Gamma^{(1)},\ldots,\Gamma^{(t)})$ are cospectral.  
\end{corollary}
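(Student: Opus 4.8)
The plan is to exploit that the WQH-switching of Lemma \ref{WQHswitching} preserves the spectrum, so that cospectrality follows once I connect $F_\Pi(\Gamma^{(1)},\ldots,\Gamma^{(t)})$ to $F_{\Pi'}(\Gamma^{(1)},\ldots,\Gamma^{(t)})$ by a finite chain of such switchings. Since being cospectral is an equivalence relation, it suffices to show that the family of switchings described in Proposition \ref{Switching} acts transitively on the set of all $(t-1)$-tuples of permutations from $\text{Sym}(\{1,\dots,v/a\})$. Each move in this chain produces a graph that is again of the form $F_{\tilde\Pi}(\Gamma^{(1)},\dots,\Gamma^{(t)})$, so Proposition \ref{Switching} reapplies at every step and the hypotheses of Lemma \ref{WQHswitching} remain satisfied throughout.

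First I would read off the effect of a single switching on the tuple. By Proposition \ref{Switching}, choosing a subset $I \ni 1$ and distinct $i,j$ leaves $\pi_r$ unchanged for $r \in I$ and replaces $\pi_r$ by $(i~j)\circ\pi_r$ for $r \notin I$. The only constraint on $I$ is that it contain $1$, so its complement may be any subset of $\{2,\dots,t\}$. Taking $I = \{1,\dots,t\}\setminus\{r_0\}$ for a single index $r_0 \in \{2,\dots,t\}$ therefore isolates one coordinate: this move left-multiplies $\pi_{r_0}$ by the transposition $(i~j)$ and fixes every other coordinate.

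Next I would use the fact that $\text{Sym}(\{1,\dots,v/a\})$ is generated by transpositions. Composing single-coordinate moves, I can left-multiply $\pi_{r_0}$ by an arbitrary product of transpositions, hence by any element of the symmetric group; in particular I can replace $\pi_{r_0}$ by $(\pi'_{r_0}\pi_{r_0}^{-1})\circ\pi_{r_0} = \pi'_{r_0}$, the target permutation, while leaving the other coordinates fixed. Applying this coordinate by coordinate for $r_0 = 2,3,\dots,t$ transforms $\Pi$ into $\Pi'$ through a finite sequence of spectrum-preserving switchings, which yields the cospectrality. The case $t=1$ is immediate, since the tuple is empty and $F_\Pi(\Gamma^{(1)}) = F_{\Pi'}(\Gamma^{(1)})$.

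The main point to verify carefully is the transitivity claim — that the single-coordinate moves really do generate every tuple — together with keeping track of the left-multiplication convention so that the transpositions are applied in the correct order. Once this group-theoretic observation is in place, the rest is formal: each step preserves the spectrum by Lemma \ref{WQHswitching}, and transitivity of the cospectrality relation closes the argument.
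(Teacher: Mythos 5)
Your proof is correct and follows the same route as the paper: the paper likewise derives the corollary by observing that Proposition \ref{Switching} lets one pass from $F_\Pi$ to $F_{\Pi'}$ via a finite sequence of spectrum-preserving WQH-switchings. The only difference is that the paper leaves the choice of the sequence implicit, whereas you make it explicit (isolating one coordinate via $I=\{1,\dots,t\}\setminus\{r_0\}$ and using that transpositions generate the symmetric group), which is a welcome filling-in of detail rather than a different argument.
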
 

It would be interesting to investigate how many pairwise non-isomorphic graphs can be constructed using our construction. In doing so, we may find a prolific construction of cospectral strictly Neumaier graphs. Although this has not been investigated in detail, we have already observed several pairwise non-isomorphic graphs with relatively small order.

\begin{corollary}\label{Neumaier24AreCospectral}
	There are four strictly Neumaier graphs with parameters $(24,8,2;1,4)$ which are cospectral. 
\end{corollary}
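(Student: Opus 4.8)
The plan is to realise the four graphs as outputs of the construction of Theorem \ref{GeneralConstruction} applied to two copies of the icosahedron, to obtain cospectrality from the Corollary immediately preceding this one, and to obtain the strictly Neumaier property from Corollary \ref{cor:t2strict}; the only real work is then to count the resulting isomorphism classes.

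First I would determine which ingredients the target parameters force. Comparing $(24,8,2;1,4)$ with the output $(vt,\,k+\lambda+1,\,\lambda)$ and clique size $\lambda+2$ of Theorem \ref{GeneralConstruction} gives $\lambda=2$, clique size $s=\lambda+2=4$, degree $k=5$, and $vt=24$. Since $a$ is a proper divisor of $\lambda+2=4$ we have $a\in\{1,2\}$ with $t=4/a$. The case $a=1$, $t=4$ would require base graphs with parameters $(6,5,2)$, which is impossible since the only $5$-regular graph on $6$ vertices is $K_6$ (for which $\lambda=4$). Hence $a=2$, $t=2$, and the base graphs must be edge-regular graphs with parameters $(12,5,2)$ carrying a partition into $6$ perfect $1$-codes of size $2$.

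Next I would produce such a base graph. The icosahedron is edge-regular with parameters $(12,5,2)$, has diameter $3$, and is antipodal with $6$ antipodal pairs. Each antipodal pair $\{x,y\}$ is a perfect $1$-code: as $x$ and $y$ lie at distance $3$, their two closed neighbourhoods (each of size $6$) are disjoint and hence partition the $12$ vertices, so every vertex outside $\{x,y\}$ is adjacent to exactly one of them; the $6$ antipodal pairs then form the required spread of perfect $1$-codes. Taking $\Gamma^{(1)}=\Gamma^{(2)}$ equal to the icosahedron and $\Pi=(\pi)$ with $\pi\in\text{Sym}(\{1,\dots,6\})$, Theorem \ref{GeneralConstruction} together with Corollary \ref{cor:t2strict} shows that each $F_{(\pi)}(\Gamma^{(1)},\Gamma^{(2)})$ is a strictly Neumaier graph with parameters $(24,8,2;1,4)$, while the preceding Corollary shows that all these graphs are cospectral.

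It remains to count isomorphism classes, which is the main obstacle. The automorphism group of the icosahedron induces on its $6$ antipodal pairs the transitive exceptional action of $A_5\cong\mathrm{PSL}(2,5)$ inside $\text{Sym}(\{1,\dots,6\})$. Relabelling the codes of the two copies independently replaces $\pi$ by $\beta\pi\alpha^{-1}$ with $\alpha,\beta\in A_5$, so permutations in a common double coset of $A_5\backslash\text{Sym}(\{1,\dots,6\})/A_5$ yield isomorphic graphs; thus there are at most as many isomorphism classes as double cosets. A Burnside count over the cycle types $1^6,\,2^2 1^2,\,3^2,\,5\cdot 1$ of this copy of $A_5$ (with $S_6$-centraliser orders $720,16,18,5$) gives $\frac{1}{|A_5|^2}\bigl(720+3600+7200+2880\bigr)=\frac{14400}{3600}=4$ double cosets, so at most four classes occur. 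The delicate part is to confirm that exactly four occur: one must check that the copy-swap, which sends $\pi$ to $\pi^{-1}$, together with any isomorphism not induced by a code-relabelling, does not merge two of these four classes. Because the four graphs are cospectral, the spectrum cannot distinguish them, so I would complete the argument by separating the four double-coset representatives with a non-spectral invariant such as the order of the automorphism group or a count of small subgraphs, thereby exhibiting four pairwise non-isomorphic cospectral strictly Neumaier graphs with parameters $(24,8,2;1,4)$.
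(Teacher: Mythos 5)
Your overall route coincides with the paper's: realise the four graphs via Theorem \ref{GeneralConstruction} applied to two copies of the icosahedral graph (the paper's Example \ref{ex:icos}), get strictness from Corollary \ref{cor:t2strict}, and get cospectrality from the corollary following Proposition \ref{Switching}, i.e.\ from iterated WQH-switchings. Your additions go beyond what the paper records and are correct: the argument that the parameters $(24,8,2;1,4)$ force $a=2$, $t=2$ and an edge-regular $(12,5,2)$ base graph (the case $a=1$, $t=4$ indeed fails since no edge-regular $(6,5,2)$ graph exists), the verification that antipodal pairs of the icosahedron are perfect $1$-codes, and the Burnside count of double cosets of the exceptional transitive $A_5 \le S_6$, which does give $\frac{1}{3600}(720+3600+7200+2880)=4$, so the construction yields at most four isomorphism classes.

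There is, however, a genuine gap, and it sits exactly where the statement needs substance: the existence of \emph{four pairwise non-isomorphic} such graphs. Your double-coset argument is an upper bound; the statement requires the lower bound, namely that no two of the four double-coset representatives give isomorphic graphs (and, as you yourself note, the copy-swap $\pi\mapsto\pi^{-1}$ or an isomorphism not induced by code-relabellings could in principle merge classes). Your proposal ends with a plan --- ``separate the representatives by a non-spectral invariant'' --- but never executes it, so as written you have proved only ``at most four, all cospectral,'' not ``four.'' The paper closes this step by explicit finite computation deferred to references: the four graphs were found as Cayley--Deza graphs in \cite{GS14}, their adjacency matrices appear in \cite[Section 4.4.1]{E20}, and the common spectrum $\left[(-1-\sqrt{5})^6, (-2)^5, (-1+\sqrt{5})^6, 2^5, 4^1, 8^1\right]$ is recorded. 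To make your argument complete you must actually carry out the isomorphism test on the four representatives (automorphism group orders, or a small-subgraph count); this is finite and routine, but it cannot be waved through precisely because the graphs are cospectral, so the cheap spectral invariants you have available at that point in the proof are guaranteed to fail to distinguish them.
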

\begin{proof}
	This can be shown by applying iterations of WQH-switchings to a given graph from Example \ref{ex:icos}. This is also easily verified by direct calculation of the spectra of the four graph from Example \ref{ex:icos}. 
	
	The adjacency matrices of these graphs can be found in \cite[Section 4.4.1]{E20}. The spectra of the four strictly Neumaier graphs are all equal to
	$\left[(-1-\sqrt{5})^6, (-2)^5, (-1+\sqrt{5})^6, 2^5, 4^1, 8^1 \right]$.	
\end{proof}

\section{Some examples of strictly Neumaier graphs given by the general construction}\label{sec:Examples}

In this section, we give several small strictly Neumaier graphs which can be constructed using Theorem \ref{GeneralConstruction}. Some of these graphs are new, and have not been found as part of an infinite family of strictly Neumaier graphs at the time of writing this paper. In these graphs, we find our underlying edge-regular graphs from various geometric and algebraic constructions.



\subsection{Construction given by a pair of icosahedrons}\label{ex:icos}
The icosahedral graph is an edge-regular graph with parameters $(12,5,2)$ that admits a partition into six perfect 1-codes of size $a = 2$. Thus, we can use $t = (\lambda+2)/a= 2$ copies of the icosahedral graph in the general construction to produce four pairwise non-isomorphic strictly Neumaier graphs (depending on the choice of the permutation $\pi_2$) with parameters $(24,8,2;1,4)$. Note that these four graphs were found as Cayley-Deza graphs in \cite{GS14}, and listed online at \cite{P22}.

One of these graphs is constructed in \cite{GK19}, and no other strictly Neumaier graphs with less than $28$ vertices are found in \cite{GK18} or \cite{GK19}. 


\subsection{Construction given by a pair of dodecahedrons}
Take a pair of dodecahedral graphs $\Gamma_1$ and $\Gamma_2$ and consider the natural matching between their vertices. Note that the dodecahedral graph has diameter 5 and admits a partition into ten perfect 2-codes of size 2. For every vertex $x_1 \in \Gamma_1$ and its matched vertex $x_2 \in \Gamma_2$, connect $x_1$ with the six vertices of $\Gamma_2$ that are at distance 2 from $x_2$ and connect $x_2$ with the six vertices of $\Gamma_1$ that are at distance 2 from $x_1$. We obtain an edge-regular graph with parameters $(40,9,2)$ that admits a partition into ten perfect 1-codes of size $a = 4$, where every perfect 1-code is a union of matched perfect 2-codes in the original dodecahedral graphs. We then apply the general construction to the edge-regular graph with parameters $(40,9,2)$, which gives a strictly Neumaier graph with parameters $(40,12,2;1,4)$.

As $k-\lambda=12-2=10$ is not a prime power, the parameters do not meet the necessary conditions of the construction in \cite{GK18}. The resulting graph has a unique spread of 1-regular cliques. Removing the edges of the cliques of the spread does not result in a distance-regular graph, so this graph does not come from the construction in \cite{GK19}.

\subsection{A graph on 65 vertices}\label{graph65}
Consider the Cayley graph $Cay(\mathbb{Z}^+_{65},\{2^i~mod ~65 : i \in \{0,\ldots, 11\}\}$, which is edge-regular with parameters $(65,12,3)$ and admits a partition into perfect 1-codes of size $a = 5$ given by the cosets of the subgroup $\{0,13,26,39,52\}$. We then apply the general construction, which gives a strictly Neumaier graph with parameters $(65,16,3;1,5)$. 

As the resulting graph has an odd number of vertices, the parameters do not meet the necessary conditions of the construction in \cite{GK18}. The resulting graph also has a unique spread of 1-regular cliques. Removing the edges of the cliques of the spread does not result in a distance-regular graph, so this graph does not come from the construction in \cite{GK19}.   

\subsection{Construction given by the 6-regular triangular grid}
Eisenstein integers are the complex numbers of the form $\mathbb{Z}[\omega] = \{b+c\omega : b,c \in \mathbb{Z}\}$, where $\omega = \frac{-1+i\sqrt{3}}{2}$. They form a ring with respect to usual addition and multiplication.
The norm mapping $N:\mathbb{Z}[\omega]\mapsto \mathbb{N} \cup \{0\}$ is defined as follows. For an Eisenstein integer $b+c\omega$, $N(b+c\omega) = b^2+c^2-bc$ holds. The norm mapping $N$ is known to be multiplicative. It is well-known that $\mathbb{Z}[\omega]$ forms an Euclidean domain (in particular, a principal ideal domain). 
The units of $\mathbb{Z}[\omega]$ are $\{\pm 1, \pm\omega, \pm\omega^2\}$. 
The natural geometrical interpretation of Eisenstein integers is the 6-regular triangular grid in the complex plane. If it does not lead to a contradiction, we use the same notation $\mathbb{Z}[\omega]$ for the triangular grid. 


The grid $\mathbb{Z}[\omega]$ has exactly six elements of norm 7; these are $\{\pm(1+3\omega),\pm(3+2\omega),\pm(2-\omega)\}$. Consider the ideal $I$ generated by an element of norm 7 (say, by the element $2-\omega$). Note that as $I$ is generated by an element of norm 7, $I$ is an additive subgroup of index 7 in $\mathbb{Z}[\omega]$; we denote it by $I^+$.

In Figure \ref{Ideal}, we represent a subset of the grid $\mathbb{Z}[\omega]$ by points, and distinct points are connected by an edge when they differ by an element of $\{\pm 1, \pm\omega\, \pm(1+\omega)\}$. Elements of $I$ are in blue, and the balls of radius 1 centred at elements of $I$ are highlighted by bold edges between two members of the same ball. Figure \ref{Ideal} illustrates that elements of $I$ form a perfect 1-code in the triangular grid.

The seven cosets $\mathbb{Z}[\omega]/I^+$ give a partition of the triangular grid into seven perfect 1-codes formed by vertices of the same coset, represented by vertices with the same colour in Figure \ref{Partition}. For example, the dark blue vertices are formed by the elements of the ideal $I$. 
Consider a quadruple (block) of the balls of radius 1 centred at dark blue vertices in Figure \ref{Block}; these balls have 28 vertices. Take the following two additive subgroups of $\mathbb{Z}[\omega]$:
$$
T_1:=\{2(-2+\omega)x+14y~|~x,y\in \mathbb{Z}\},
$$
$$
T_2:=\{(5+\omega)x+28y~|~x,y\in \mathbb{Z}\}.
$$


Since $-2+\omega$, $7$ and $5+\omega$ are divisible by $2-\omega$, a generator of $I$, we have that $T_1$ and $T_2$ are subgroups in $I^+$ (see Figures \ref{T1} and \ref{T2}).Note that there exists a block of four balls of radius 1 such that the additive shifts of the block by the elements of $T_1$ and $T_2$ give two tessellations of $\mathbb{Z}[\omega]$ (see Figures \ref{T1} and \ref{T2}, where tiles consist of all points lying on and within an area encompassed by bold edges). 
Consider the quotient groups $$G_1 := \mathbb{Z}[\omega]/T_1$$ and $$G_2:=\mathbb{Z}[\omega]/T_2,$$
where $G_1 \cong \mathbb{Z}_2 \oplus \mathbb{Z}_{14}$ and $G_2 \cong \mathbb{Z}_{28}$.
Define two Cayley graphs 
$$\Delta_1:=Cay(G_1,\{\pm(1+T_1), \pm(\omega+T_1), \pm(\omega^2+T_1)\}),$$
$$\Delta_2:=Cay(G_2,\{\pm(1+T_2), \pm(\omega+T_2), \pm(\omega^2+T_2)\}).$$
Note that $\Delta_1$ and $\Delta_2$ can be interpreted as quotient graphs of the triangular grid by $T_1$ and $T_2$, respectively. 
Each of the graphs $\Delta_1$ and $\Delta_2$ is edge-regular with parameters $(28,6,2)$ and admits a partition into perfect 1-codes of size $a = 4$; these partitions are given by the original partition of the triangular grid into perfect 1-codes (see Figures \ref{Delta1} and \ref{Delta2}). We then apply the general construction, which gives two strictly Neumaier graphs with parameters $(28,9,2;1,4)$. 

In \cite[Example 1]{GK18} exactly one strictly Neumaier is explicitly constructed with generating group $G_1$, which is exactly the graph $\Delta_1$. Each resulting graph also has a unique spread of 1-regular cliques. Removing the edges of a spread does not result in a distance-regular graph, so the graph does not come from the construction in \cite{GK19}.







\subsection{Graphs on 78 vertices from the tetrahedral-octahedral honeycomb}\label{ss:honeycomb}

The tetrahedral-octahedral honeycomb is a tiling of the 3-dimensional Euclidean space with an alternating pattern of tetrahedra and octahedra. The skeleton of this tiling defines an infinite graph which can be described as a 3-dimensional sublattice $L$ of $\mathbb{Z}^3$. Similar to the case of the triangular grid, we find a subgroup $J$ of index 13 in $L$, for which the balls of radius 1 around the elements of $J$ give a tiling of $L$. This gives a partition of the skeleton into thirteen perfect 1-codes. Using computational tools, we then take different quotients which preserve the partition, and find at least eight edge-regular graphs with parameters $(78,12,4)$. Applying the general construction to these graphs yields at least eight strictly Neumaier graphs with parameters $(78,17,4;1,6)$. 

Alternatively, these graphs can be constructed as circulant graphs. Consider the subsets $S_i$ of the group $\mathbb{Z}_{78}$, for $i\in\{1,2,\dots,8\}$, where
\begin{align*}
S_1 &=\{ 1, 3, 4, 15, 18, 19, 59, 60, 63, 74, 75, 77 \},\\
S_2 &=\{ 1, 4, 5, 15, 16, 20, 58, 62, 63, 73, 74, 77 \},\\
S_3 &=\{ 1, 15, 16, 19, 34, 35, 43, 44, 59, 62, 63, 77 \},\\
S_4 &=\{ 1, 6, 15, 16, 21, 22, 56, 57, 62, 63, 72, 77 \},\\
S_5 &=\{ 1, 17, 18, 28, 29, 32, 46, 49, 50, 60, 61, 77 \},\\
S_6 &=\{ 1, 7, 8, 29, 30, 37, 41, 48, 49, 70, 71, 77 \},\\
S_7 &=\{ 1, 3, 4, 20, 21, 24, 54, 57, 58, 74, 75, 77 \},\\
S_8 &=\{ 1, 6, 17, 18, 23, 24, 54, 55, 60, 61, 72, 77 \}.
\end{align*}
Each of the edge-regular graphs with parameters $(78,12,4)$ we construct from the honeycomb is isomorphic to $Cay(\mathbb{Z}^+_{78},S_i)$ for some $i$. Furthermore, the strictly Neumaier graphs found by applying our construction to these graphs are exactly the graphs $Cay(\mathbb{Z}^+_{78},S_i\cup H^*)$, where $H^*$ is the set of nonidentity elements of the subgroup of $\mathbb{Z}^+_{78}$ of index $13$.

As each resulting graph has regular cliques with sizes not divisible by 4, the parameters do not meet the necessary conditions of the explicit constructions in \cite{GK18}. Each resulting graph also has a unique spread of 1-regular cliques. Removing the edges of the cliques of the spread does not result in a distance-regular graph, so the graph does not come from the construction in \cite{GK19}.

\section{Further problems: Infinite edge-regular lattices}\label{sec:FurtherProblems}

In Section \ref{sec:Examples} we have seen several strictly Neumaier graphs constructed using infinite lattices which are edge-regular. In this section, we give certain families of infinite edge-regular lattices, and find new strictly Neumaier graphs in certain cases. We also present open problems relevant to the construction of new infinite families of strictly Neumaier graphs from these lattices.

In the following, we consider countably infinite graphs with vertices consisting of elements of the vector space $\mathbb{R}^n$, for some integer $n\geq 3$. The elements of $\mathbb{R}^n$ are called \emph{($n$-dimensional) vectors}, and we identify the elements with their coordinates with respect to the standard basis of $\mathbb{R}^n$. 

Let $x\in \mathbb{R}^n$. For a set $A\subseteq \mathbb{R}$, the vector $x$ is an \emph{$A$-vector} if the value of all of its entries lie in $A$. The \emph{weight} of $x$ is the number of its non-zero entries.

Let $n \ge 3$ be a positive integer and let $m$ be an even positive integer.
Let $S_{n,m}^{(1)}$ denote the set of all $n$-dimensional $\{1,-1,0\}$-vectors of weight $m$ whose sum of coordinates is zero.
Let $S_{n,m}^{(2)}$ denote the set of all $n$-dimensional $\{1,-1,0\}$-vectors of weight $m$.
Let $G_{n,m}^{(1)}$ and $G_{n,m}^{(2)}$ be the groups generated by $S_{n,m}^{(1)}$ and $S_{n,m}^{(2)}$ respectively.

In fact, for any positive even integer $m$ and any integer $n$ such that $n \ge m+1$, we have $G_{n,m}^{(1)}=G_{n,2}^{(1)}$ and $G_{n,m}^{(2)}=G_{n,2}^{(2)}.$ From now on, we let $$G_{n}^{(1)}:=G_{n,2}^{(1)},$$
$$G_{n}^{(2)}:=G_{n,2}^{(2)},$$
and define graphs $$\Gamma_{n,m}^{(1)}:=Cay(G_{n}^{(1)},S_{n,m}^{(1)})$$ and
$$\Gamma_{n,m}^{(2)}:=Cay(G_{n}^{(2)},S_{n,m}^{(2)}).$$

A graph $\Gamma$ 
with infinitely many vertices is \emph{edge-regular} with \emph{parameters} $(k,\lambda)$ if it is $k$-regular and each pair of adjacent vertices have exactly $\lambda$ common neighbours.




 
In the following, we show that  $\Gamma_{n,m}^{(1)}$ and $\Gamma_{n,m}^{(2)}$ are infinite edge-regular graphs, and give the parameters of these graphs in terms of binomial coefficients $\binom{a}{b}$. 

\begin{proposition} For any positive even integer $m$ and any integer $n$ such that $n \ge m+1$, the following statements hold.
\begin{enumerate}
    \item The graph $\Gamma_{n,m}^{(1)}$ is an induced subgraph in $\Gamma_{n,m}^{(2)}$.




    \item $\Gamma_{n,m}^{(1)}$ is an infinite edge-regular graph with parameters $(k_1,\lambda_1)$, such that $$k_1 = \binom{n}{m}\binom{m}{m/2},$$  
    \begin{eqnarray*}
        \lambda_1 &=& \sum\limits_{i=0}^{m/2}\binom{m/2}{i}\binom{m/2}{m/2-i}\binom{n-m}{i}\binom{n-m-i}{m/2-i}\\
                  &=& \binom{n-m}{m/2}\sum\limits_{i=0}^{m/2}\binom{m/2}{i}^3.
    \end{eqnarray*}

    \item $\Gamma_{n,m}^{(2)}$ is an infinite edge-regular graph with parameters $(k_2,\lambda_2)$, such that $$k_2 = 2^m\binom{n}{m},$$  $$\lambda_2 = \binom{m}{m/2}\binom{n-m}{m/2}2^{m/2}.$$
\end{enumerate}
\end{proposition}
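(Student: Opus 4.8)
The plan is to treat all three parts through the Cayley-graph structure. Since $G_n^{(1)}$, the integer vectors with zero coordinate sum, is contained in $G_n^{(2)}$, the integer vectors with even coordinate sum, and since $S_{n,m}^{(1)}\subseteq S_{n,m}^{(2)}$, part (1) will follow once I check that for $u,v\in G_n^{(1)}$ the difference $u-v$ lies in $S_{n,m}^{(2)}$ if and only if it lies in $S_{n,m}^{(1)}$. This holds because $u-v$ already has zero coordinate sum, so adjacency in $\Gamma_{n,m}^{(2)}$ restricted to $G_n^{(1)}$ coincides with adjacency in $\Gamma_{n,m}^{(1)}$, exhibiting $\Gamma_{n,m}^{(1)}$ as an induced subgraph.

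For parts (2) and (3), I would first record that both graphs are vertex-transitive (as Cayley graphs) and in fact arc-transitive: the connection set $S_{n,m}^{(2)}$ is invariant under the signed permutation group acting on coordinates (which preserves $G_n^{(2)}$), while $S_{n,m}^{(1)}$ is invariant under coordinate permutations (which preserve $G_n^{(1)}$), and in each case the relevant group, together with the translations, acts transitively on the connection set and hence on arcs. Arc-transitivity guarantees that the two endpoints of every edge have the same number of common neighbours, so each graph is edge-regular and it suffices to compute degrees and common-neighbour counts at a single base edge. The degrees are the sizes of the connection sets: choosing the $m$ nonzero positions and then the $m/2$ of them carrying a $+1$ gives $k_1=C_n^m C_m^{m/2}$, while choosing the $m$ nonzero positions and a sign for each gives $k_2=2^m C_n^m$.

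To compute $\lambda_2$, I fix a base edge $\{0,s\}$ with $s\in S_{n,m}^{(2)}$ and count the vectors $w\in S_{n,m}^{(2)}$ with $w-s\in S_{n,m}^{(2)}$, working coordinate by coordinate. Let $A$ be the support of $s$. Outside $A$ one has $(w-s)_j=w_j$, so such coordinates contribute equally to the weights of $w$ and of $w-s$; inside $A$ the requirement that $w-s$ be a $\{1,-1,0\}$-vector forces $w_j\in\{0,s_j\}$, and the coordinates where $w_j=s_j$ (respectively $w_j=0$) contribute to the weight of $w$ (respectively of $w-s$). Requiring both weights to equal $m$ then forces exactly $m/2$ agreements on $A$ and exactly $m/2$ nonzero entries off $A$, yielding $C_m^{m/2}\cdot C_{n-m}^{m/2}\cdot 2^{m/2}=\lambda_2$.

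The computation of $\lambda_1$ is the crux and proceeds along the same lines but must also respect the zero-sum constraint, which is the main obstacle. Writing $A_+$ and $A_-$ for the $+1$- and $-1$-supports of $s$ (each of size $m/2$), I introduce $a$ and $b$ for the number of agreements $w_j=s_j$ on $A_+$ and on $A_-$, and $c$ and $d$ for the numbers of $+1$ and $-1$ entries of $w$ off $A$. The conditions that $w$ and $w-s$ both have weight $m$ give $a+b=m/2$ and $c+d=m/2$ (the zero-sum condition on $w-s$ being automatic), while the zero-sum condition on $w$ gives $a-b+c-d=0$; solving these three linear relations collapses the four parameters to a single free one, namely $b=c=m/2-a$ and $d=a$. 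Setting $i=a$ and counting the independent choices block by block---$C_{m/2}^i$ ways on $A_+$, $C_{m/2}^{m/2-i}$ ways on $A_-$, and $C_{n-m}^{m/2-i}C_{n-3m/2+i}^{i}$ ways to place the $m/2-i$ plus-ones and then the $i$ minus-ones on the $n-m$ coordinates off $A$---reproduces exactly the summand of the claimed formula, and summing over $i\in\{0,\dots,m/2\}$ gives $\lambda_1$. The delicate point is disentangling the three constraints to see that only one index survives and then matching the block counts to the stated product of binomials; boundary cases where a binomial coefficient is out of range simply contribute zero under the usual convention, consistently with the hypothesis $n\ge m+1$.
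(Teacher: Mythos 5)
Your proposal is correct and follows the same route as the paper, which disposes of part (1) via the containment $G_n^{(1)}\subseteq G_n^{(2)}$ and dismisses parts (2) and (3) with the phrase ``standard counting arguments.'' In fact your write-up supplies exactly the details the paper omits: the observation that zero coordinate sum of $u-v$ makes the subgraph induced, the arc-transitivity reduction to a single base edge $\{0,s\}$, and the coordinate-by-coordinate count whose constraint analysis ($a+b=c+d=m/2$, $a-b+c-d=0$, collapsing to the single index $i=a$) reproduces the stated binomial sums.
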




\begin{proof}
The elements of $G_{n}^{(1)}$ form a subset in the set of elements of $G_{n}^{(2)}$ and $G_{n}^{(1)}\cap S_{n,m}^{(2)} = S_{n,m}^{(1)}$, so $\Gamma_{n,m}^{(1)}$ is an induced subgraph of $\Gamma_{n,m}^{(2)}$. The formulas for parameters $k_1, k_2,$ and $\lambda_2$ can be obtained by applying standard counting arguments. The first formula for $\lambda_1$ is also found using standard counting arguments, and equality with the second formula is proven by using the definition of the binomial coefficients to manipulate the expressions in the first formula. 
\end{proof}
\begin{remark}
Note that if $n < 3m/2$, then $\lambda_1 = 0$ and $\lambda_2 = 0$. Otherwise, $\lambda_1 > 0$ and $\lambda_2 > 0$. 
\end{remark}






\begin{remark}\label{rootSystems}
The generating sets $S_{n,2}^{(1)}$ and $S_{n,2}^{(2)}$ are known as root systems $A_{n-1}$ and $D_n$, respectively (see \cite[Chapter 8]{BH12}). In particular, the root lattice $A_2$ is isomorphic to the 6-regular triangular grid, and the root lattices $A_3$ and $D_3$ are both isomorphic to the tetrahedral-octahedral honeycomb.
\end{remark}

Now we have an approach to construct Neumaier graphs using a computer, which is similar the approaches in Sections \ref{ss:6reggrid} and \ref{ss:honeycomb}. In this process we start with $G=G_{n}^{(i)}$ and $\Gamma=\Gamma_{n,m}^{(i)}$, for some integer $n>2$, positive and even integer $m$, and $i\in \{1,2\}$. Then we find a subgroup $I$ of $G$ which is a perfect 1-code in $\Gamma$, and has finite index determined by what parameter set we want our resulting Neumaier graph to have. A subgroup $T$ of $I$ is then chosen which has the property that $\Gamma/T$ is still edge-regular, and that the cosets of $I/T$ partition $\Gamma/T$ into perfect 1-codes. We can then apply our general construction to find a Neumaier graph, with  isomorphism class that may depend on our choice parameters $n,m$ and $i$, and the groups $I$ and $T$.



In Tables \ref{tab:gamma1} and \ref{tab:gamma2}, we present the number of cases for which we find strictly Neumaier graphs by taking quotients of the graphs $\Gamma_{n,2}^{(1)}$ and $\Gamma_{n,2}^{(2)}$, respectively. The first column of the tables give the corresponding value of $n$. The second column gives the Neumaier graph parameters of the graphs we find through the construction. The last column gives the number of pairwise non-isomorphic strictly Neumaier graphs we find from the construction. Note that by Remark \ref{rootSystems}, the entry for $n=4$ in Table \ref{tab:gamma1} will coincide with the entry for $n=3$ in Table \ref{tab:gamma2}.

\begin{table}
\centering
\begin{tabular}{|c|c|c|}
  \hline
  $n$ & parameters of SNG & $\#$ \\
  \hline
  3 & $(28,9,2;1,4)$ & 2 \\
  \hline
  4 & $(78,17,4;1,6)$ & $\ge 8$ \\
  \hline
  5 & $(168,27,6;1,8)$ & $\ge 12$ \\
  \hline
  6 & $(310,39,8;1,10)$ & $\ge 1$ \\
  \hline
\end{tabular}
\caption{Number of strictly Neumaier graphs from quotients of $\Gamma_{n,2}^{(1)}$.}\label{tab:gamma1}
\end{table}

\begin{table}
\centering
\begin{tabular}{|c|c|c|}
  \hline
  $n$ & parameters of SNG & $\#$ \\
  \hline
  3 & $(78,17,4;1,6)$ & $\ge 8$ \\
  \hline
  4 & $(250,33,8;1,10)$ & $\ge 16$ \\
  \hline
\end{tabular}
\caption{Number of strictly Neumaier graphs from quotients of $\Gamma_{n,2}^{(2)}$.}\label{tab:gamma2}
\end{table}

We have not been able to find more examples of perfect codes and quotients of $\Gamma_{n,2}^{(1)}$ and $\Gamma_{n,2}^{(2)}$ that lead to strictly Neumaier graphs. We plan to determine all conditions necessary for our approach to construct Neumaier graphs and apply our ideas to infinite families of graphs. However, we do not believe this is within the scope of the current paper. Therefore, we ask the following.

\begin{problem}
What strictly Neumaier graphs can be obtained from quotients of infinite edge-regular graphs $\Gamma_{n,m}^{(1)}$ and $\Gamma_{n,m}^{(2)}$?
\end{problem}

We can also use two infinite edge-regular graphs to get a new infinite edge-regular graph by taking the Cartesian product of the graphs (for the definition of Cartesian product of graphs, see \cite{BH12}, for example).

\begin{proposition}\label{CartProd}
Let $\Gamma_1$ and $\Gamma_2$ be two infinite edge-regular graphs with parameters $(k_1,\lambda)$ and $(k_2,\lambda)$, respectively. Then the Cartesian product of $\Gamma_1$ and $\Gamma_2$ is an edge-regular graph with parameters $(k_1+k_2, \lambda)$. 
\end{proposition}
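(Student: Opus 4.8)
The plan is to verify the two defining properties of edge-regularity directly from the definition of the Cartesian product. Write $\Gamma$ for the Cartesian product of $\Gamma_1$ and $\Gamma_2$, so that $V(\Gamma)=V(\Gamma_1)\times V(\Gamma_2)$ and two vertices $(u_1,u_2)$ and $(w_1,w_2)$ are adjacent precisely when either $u_1=w_1$ and $u_2w_2\in E(\Gamma_2)$, or $u_2=w_2$ and $u_1w_1\in E(\Gamma_1)$. The regularity is immediate: the neighbours of $(u_1,u_2)$ split into those obtained by moving in the first coordinate (there are $k_1$ of these, since $\Gamma_1$ is $k_1$-regular) and those obtained by moving in the second coordinate ($k_2$ of these, since $\Gamma_2$ is $k_2$-regular), so every vertex has degree $k_1+k_2$.

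For the common-neighbour count, I would take an arbitrary edge of $\Gamma$. By the symmetric roles of the two factors, I may assume the edge lies in the second coordinate, i.e.\ it joins $(w,u_2)$ and $(w,v_2)$ with a fixed $w\in V(\Gamma_1)$ and $u_2v_2\in E(\Gamma_2)$; the case of an edge in the first coordinate is identical with the roles of $\Gamma_1$ and $\Gamma_2$ interchanged. I then classify a putative common neighbour $(x_1,x_2)$ according to whether or not $x_1=w$.

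The heart of the argument is to show that every common neighbour in fact satisfies $x_1=w$. Writing out the adjacency of $(x_1,x_2)$ to each of the two endpoints gives, for each endpoint, a disjunction of two conditions (equal first coordinate with adjacent second coordinate, or equal second coordinate with adjacent first coordinate). Combining the four resulting possibilities, three of them are impossible: whenever one endpoint forces $x_1=w$ while the other forces $x_1\sim w$, we would need the loop-free graph $\Gamma_1$ to contain the edge $ww$, a contradiction; and the one remaining mixed possibility forces $x_2=u_2=v_2$, contradicting $u_2\ne v_2$. Hence a common neighbour must satisfy $x_1=w$ together with $x_2\sim u_2$ and $x_2\sim v_2$, so that $x_2$ is a common neighbour of $u_2$ and $v_2$ in $\Gamma_2$. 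Since $\Gamma_2$ is edge-regular with parameter $\lambda$ and $u_2v_2\in E(\Gamma_2)$, there are exactly $\lambda$ such $x_2$, and therefore exactly $\lambda$ common neighbours.

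The computation is routine, so I expect no genuine obstacle; the single point that must not be overlooked is that the count comes out to be the \emph{same} value $\lambda$ for both types of edge. This is exactly where the hypothesis that $\Gamma_1$ and $\Gamma_2$ share the common value $\lambda$ is used: an edge in the first coordinate yields $\lambda$ common neighbours via edge-regularity of $\Gamma_1$, while an edge in the second coordinate yields $\lambda$ via edge-regularity of $\Gamma_2$. Consequently $\Gamma$ is edge-regular with the single parameter $\lambda$ and degree $k_1+k_2$, as claimed.
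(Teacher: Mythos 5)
Your proof is correct and follows exactly the route the paper has in mind: the paper's own ``proof'' is the single sentence that the claim follows from the definition of the Cartesian product, and your argument is precisely the routine verification being alluded to (degree count plus the case analysis showing every common neighbour of an edge in one coordinate lies in the corresponding fibre). The only nitpick is terminological: your fourth case (both endpoints reached by moving in the first coordinate) is not really a ``mixed'' possibility, but the contradiction $x_2=u_2=v_2$ you derive for it is exactly right.
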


\begin{proof}
It follows from the definition of Cartesian product.
\end{proof}
Consider the Cartesian product of two 6-regular triangular grids; the resulting infinite graph is edge-regular with parameters (12,2). This graph has a perfect 1-code, and there exists an edge-regular quotient graph with parameters $(52,12,2)$. We then apply the general construction to this graph, which gives a strictly Neumaier graph having parameters $(52,15,2;1,4)$ and isomorphic to the graph constructed in \cite[Theorem 3.6]{GK18}. As we have seen this example using Cartesian products, we ask the following.

\begin{problem}
What strictly Neumaier graphs can be obtained from quotients of Cartesian products of infinite edge-regular graphs?
\end{problem}

\section*{Acknowledgements} \label{Ack}
The work of R.~J.~Evans, E.~V.~Konstantinova, and A.~D.~Mednykh was supported by the Mathematical Center in Akademgorodok, under agreement No. 075-15-2022-281 
with the Ministry of Science and High Education of the Russian Federation. S.~Goryainov also thanks Mathematical Center in Akademgorodok for organising his visit in Akademgorodok in July 2021. Finally, S.~Goryainov and E.~V.~Konstantinova thank D.~Krotov for useful discussions.


\begin{landscape}
	\begin{figure}[htbp]
		\includegraphics[scale=0.15]{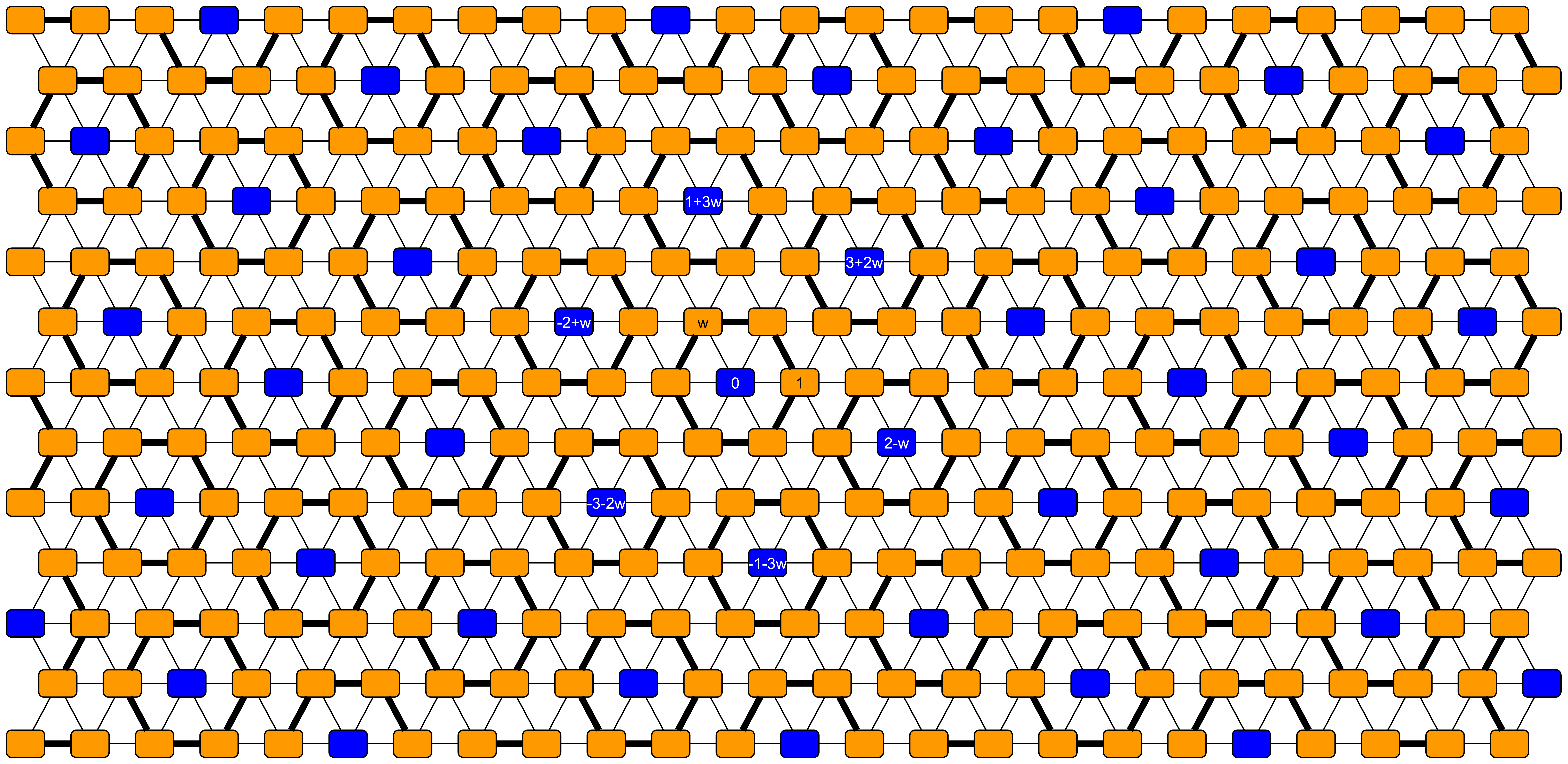}
		\caption{The ideal generated by an element of norm 7}
		\label{Ideal}
	\end{figure}
\end{landscape}

\begin{landscape}
	\begin{figure}[htbp]
		\includegraphics[scale=0.15]{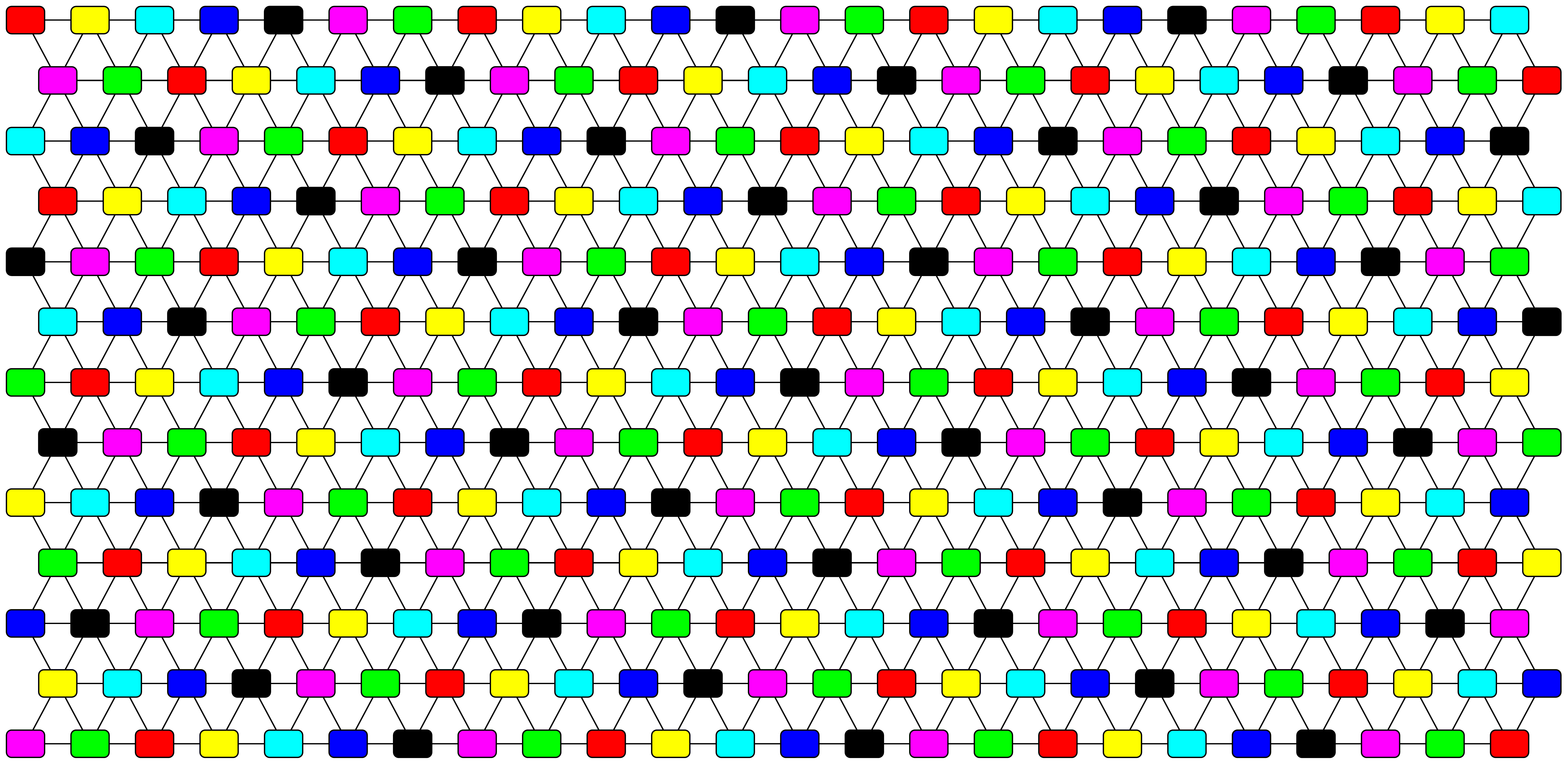}
		\caption{Partition into perfect 1-codes}
		\label{Partition}
	\end{figure}
\end{landscape}

\begin{landscape}
	\begin{figure}[htbp]
		\includegraphics[scale=0.15]{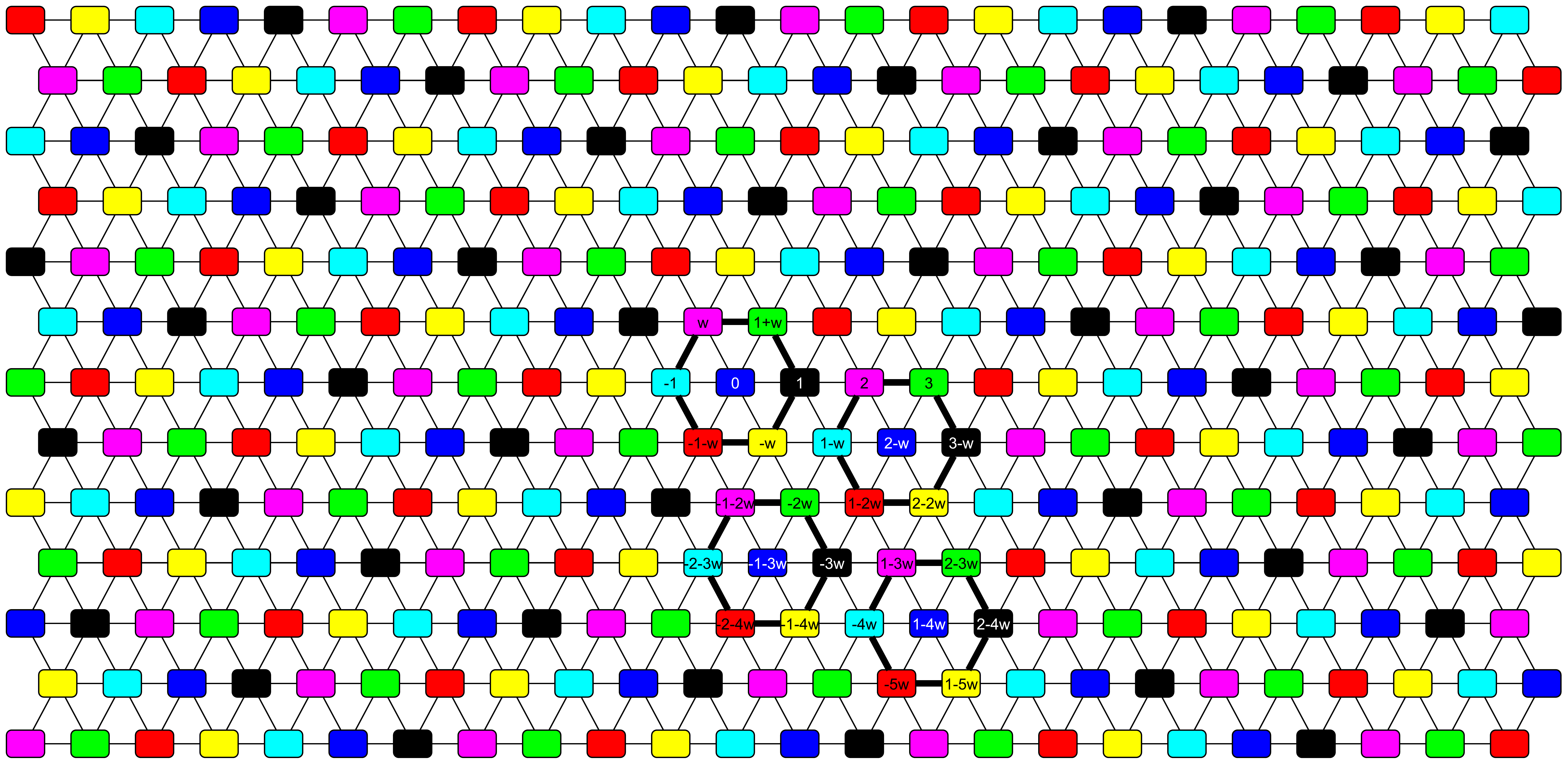}
		\caption{The block of balls of radius 1 centred at dark blue vertices}
		\label{Block}
	\end{figure}
\end{landscape}

\begin{landscape}
	\begin{figure}[htbp]
		\includegraphics[scale=0.15]{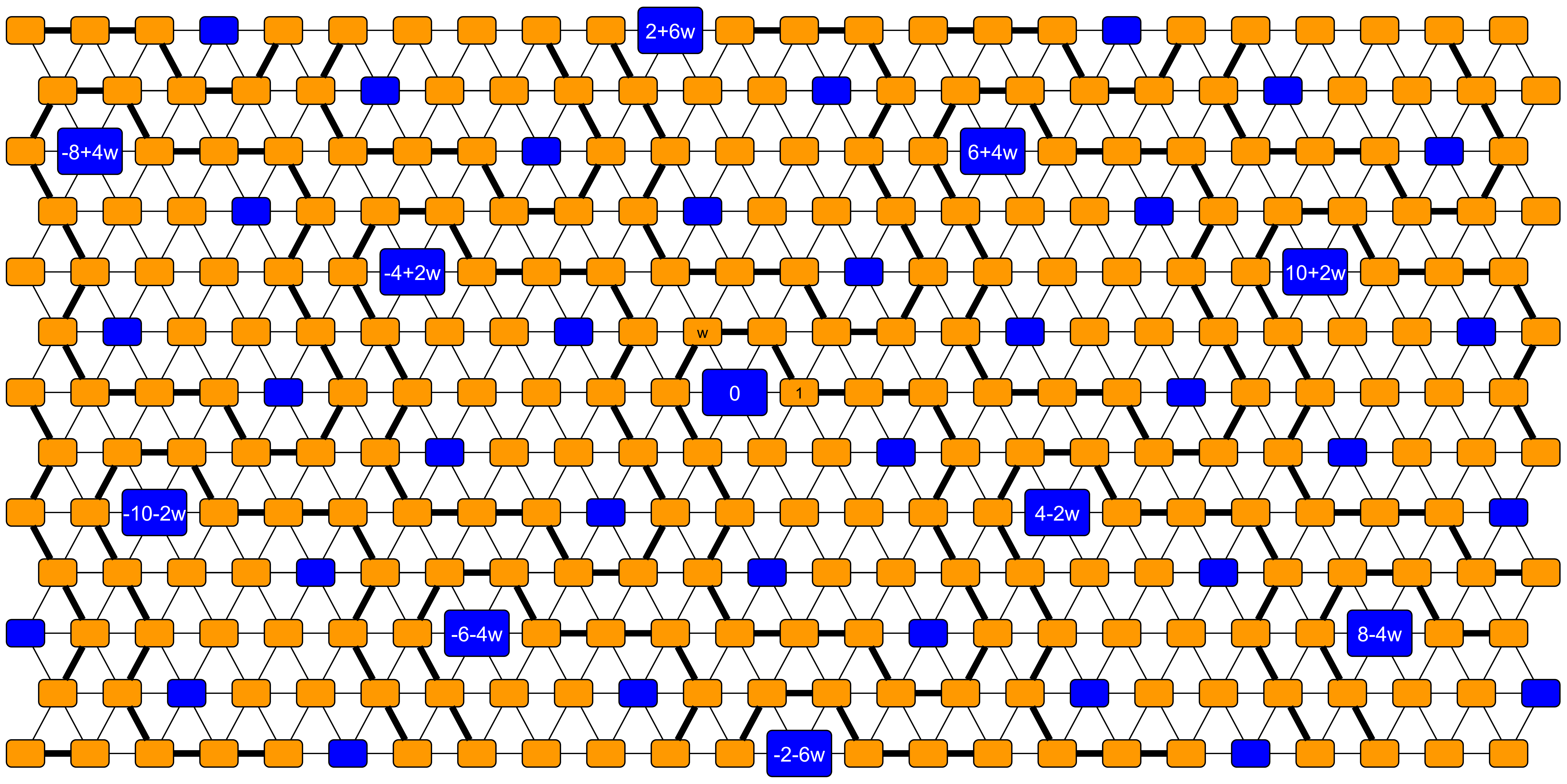}
		\caption{Subgroup $T_1$}
		\label{T1}
	\end{figure}
\end{landscape}

\begin{landscape}
	\begin{figure}[htbp]
		\includegraphics[scale=0.15]{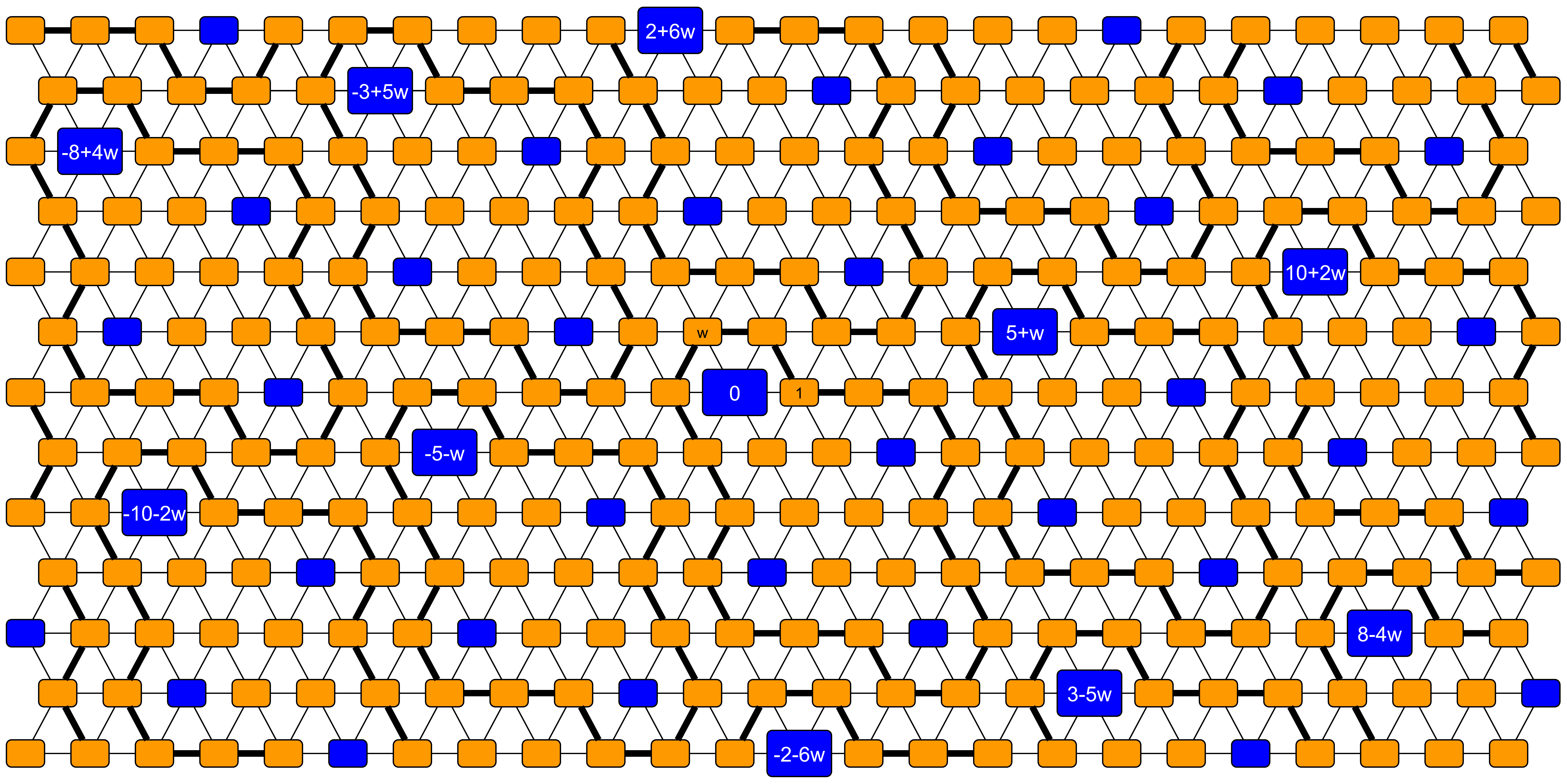}
		\caption{Subgroup $T_2$}
		\label{T2}
	\end{figure}
\end{landscape}

\begin{landscape}
	\begin{figure}[htbp]
		\includegraphics[scale=0.15]{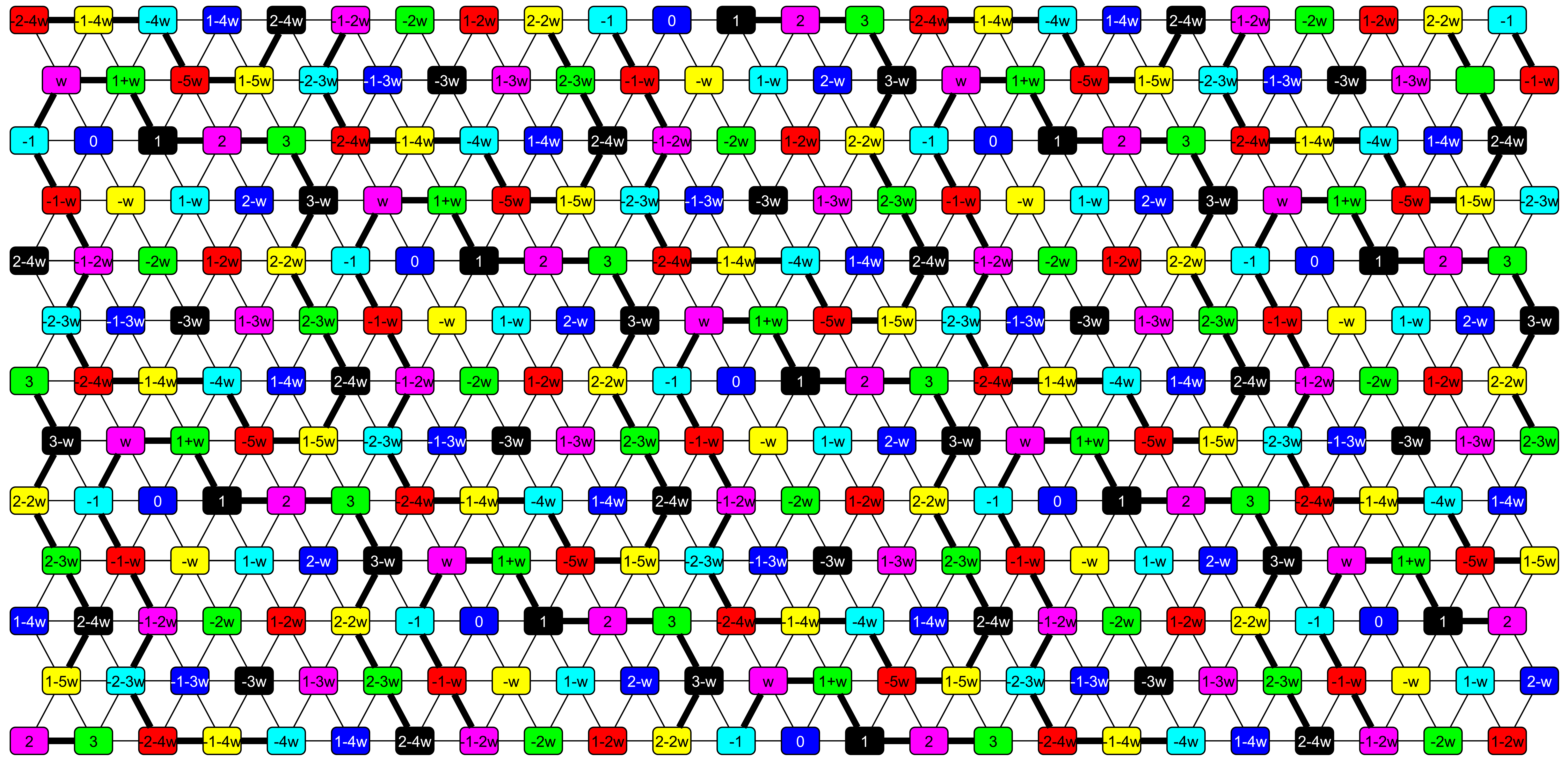}
		\caption{Quotient graph $\Delta_1$ and its partition into seven perfect 1-codes}
		\label{Delta1}
	\end{figure}
\end{landscape}

\begin{landscape}
	\begin{figure}[htbp]
		\includegraphics[scale=0.15]{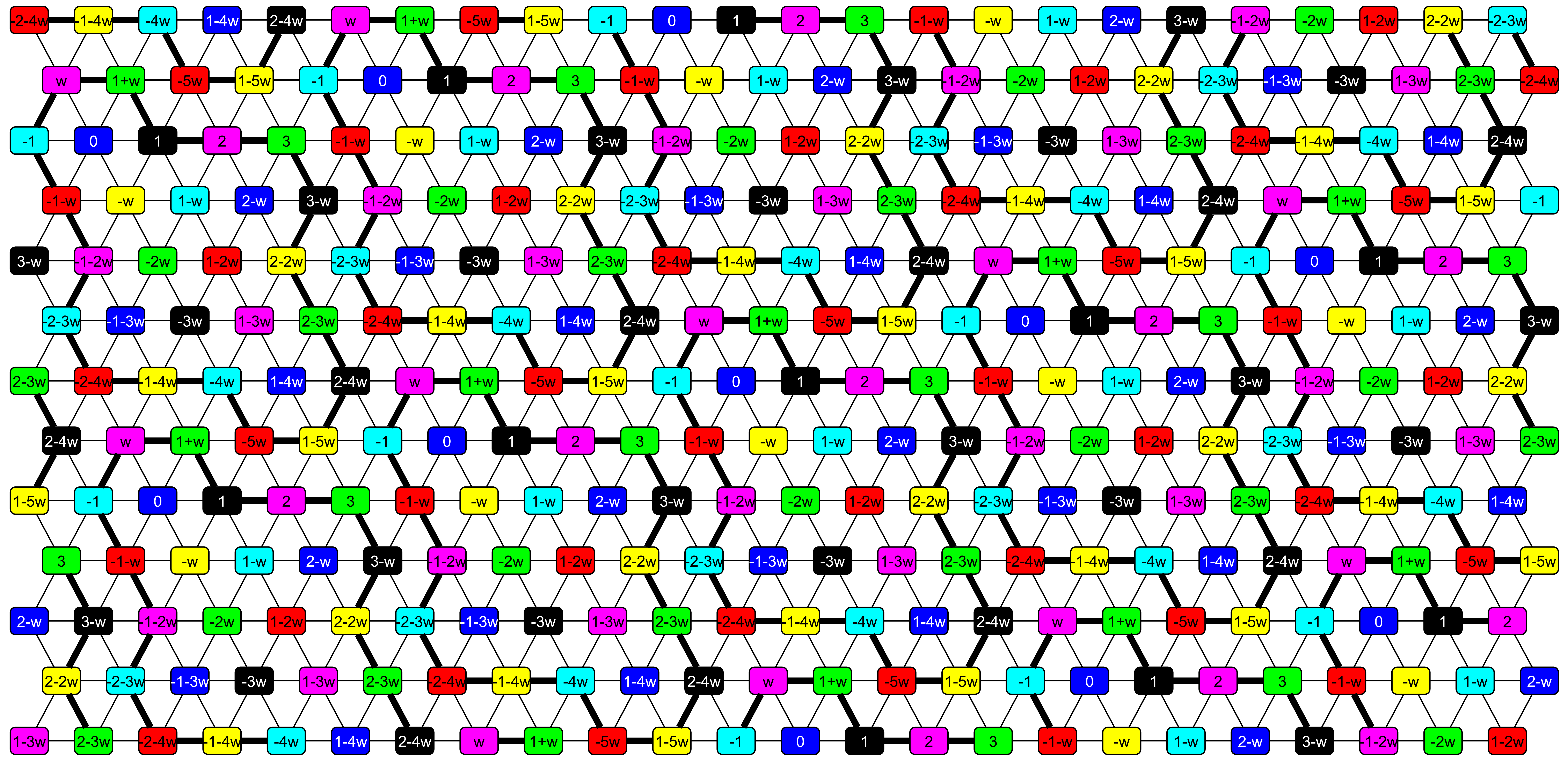}
		\caption{Quotient graph $\Delta_2$ and its partition into seven perfect 1-codes}
		\label{Delta2}
	\end{figure}
\end{landscape}

\end{document}